\theoremstyle{plain}%default
\newtheorem{thm}{Theorem}[section]
\newtheorem{lem}[thm]{Lemma}
\newtheorem{prop}[thm]{Proposition}
\newtheorem{cor}[thm]{Corollary}
\theoremstyle{definition}
\newtheorem{defi}[thm]{Definition}
\newtheorem{qu}[thm]{Question}
\newtheorem{rmds}[thm]{Reminders}
\theoremstyle{remark}
\newtheorem*{note}{Note}
\newtheorem{rmks}[thm]{Remarks}
 \DeclareMathOperator{\Id}{Id}
\DeclareMathOperator{\Hom}{Hom}
 \DeclareMathOperator{\Spec}{Spec}
 \DeclareMathOperator{\ann}{ann}
\DeclareMathOperator{\grann}{gr-ann} 
\def\N{\mathbb N}
\def\fa{{\mathfrak{a}}}
\def\fA{{\mathfrak{A}}}
\def\fb{{\mathfrak{b}}}
\def\fB{{\mathfrak{B}}}
\def\fc{{\mathfrak{c}}}
\def\fC{{\mathfrak{C}}}
\def\fd{{\mathfrak{d}}}
\def\fg{{\mathfrak{g}}}
\def\fh{{\mathfrak{h}}}
\def\fm{{\mathfrak{m}}}
\def\fM{{\mathfrak{M}}}
\def\fp{{\mathfrak{p}}}
\def\fP{{\mathfrak{P}}}
\def\fq{{\mathfrak{q}}}
\def\fQ{{\mathfrak{Q}}}
\def\nn{\relax\ifmmode{\mathbb N_{0}}\else$\mathbb N_{0}$\fi}
\def\lra{\longrightarrow}
\begin{document}

\vspace{0.5in}

\title{Graded annihilators and uniformly $F$-compatible ideals}
\author{RODNEY Y. SHARP}
\address{{\it School of Mathematics and Statistics\\
University of Sheffield\\ Hicks Building\\ Sheffield S3 7RH\\ United
Kingdom}} \email{R.Y.Sharp@sheffield.ac.uk}

\subjclass[2010]{Primary 13A35, 16S36, 13E05, 13F40; Secondary
13J10}

\date{\today}

\keywords{Commutative Noetherian local ring, prime characteristic,
Frobenius homomorphism, tight closure, test element, excellent ring,
Frobenius skew polynomial ring, graded annihilator, $F$-pure ring,
uniformly $F$-compatible ideal.}

\begin{abstract} Let $R$ be a commutative (Noetherian) local ring of
prime characteristic $p$ that is $F$-pure. This paper is concerned
with comparison of three finite sets of radical ideals of $R$, one
of which is only defined in the case when $R$ is $F$-finite (that
is, is finitely generated when viewed as a module over itself via
the Frobenius homomorphism). Two of the afore-mentioned three sets
have links to tight closure, via test ideals. Among the aims of the
paper are a proof that two of the sets are equal, and a proposal for
a generalization of I. M. Aberbach's and F. Enescu's splitting
prime. \vspace{0.2in}

\end{abstract}

\maketitle

\setcounter{section}{-1}
\section{\it Introduction}
\label{intro}

Throughout the paper, let $(R,\fm)$ be a commutative (Noetherian)
local ring of prime characteristic $p$ having maximal ideal $\fm$.
In recent years, the study of $R$-modules with a Frobenius action
has assisted in the development of the theory of tight closure over
$R$. An $R$-module with a Frobenius action can be viewed as a left
module over the Frobenius skew polynomial ring over $R$, and such
left modules will play a central r\^ole in this paper.

The Frobenius skew polynomial ring over $R$ is described as follows.
Throughout, $f : R \lra R$ denotes the Frobenius ring homomorphism,
for which $f(r) = r^p$ for all $r \in R$. The {\em Frobenius skew
polynomial ring over $R$} is the skew polynomial ring $R[x,f]$
associated to $R$ and $f$ in the indeterminate $x$; as a left
$R$-module, $R[x,f]$ is freely generated by $(x^i)_{i \geq 0}$, and
so consists of all polynomials $\sum_{i = 0}^n r_i x^i$, where $n
\geq 0$ and $r_0,\ldots,r_n \in R$; however, its multiplication is
subject to the rule $xr = f(r)x = r^px$ for all $r \in R.$

We can think of $R[x,f]$ as a positively-graded ring $R[x,f] =
\bigoplus_{n=0}^{\infty} R[x,f]_n$, where $R[x,f]_n = Rx^n$ for $n
\geq 0$. The {\em graded annihilator\/} of a left $R[x,f]$-module
$H$ is the largest graded two-sided ideal of $R[x,f]$ that
annihilates $H$; it is denoted by $\grann_{R[x,f]}H$.

Let $G$ be a left $R[x,f]$-module that is $x$-torsion-free in the
sense that $xg = 0$, for $g \in G$, only when $g = 0$. Then
$\grann_{R[x,f]}G = \fb R[x,f]$, where $\fb = (0:_RG)$ is a radical
ideal. See \cite[Lemma 1.9]{ga}. We shall use ${\mathcal I}(G)$ (or
${\mathcal I}_R(G)$) to denote the set of $R$-annihilators of the
$R[x,f]$-submodules of $G$; we shall refer to the members of
${\mathcal I}(G)$ as the {\em $G$-special $R$-ideals\/}. For a
graded two-sided ideal $\fB$ of $R[x,f]$, we denote by $\ann_G(\fB)$
or $\ann_G\fB$ the $R[x,f]$-submodule of $G$ consisting of all
elements of $G$ that are annihilated by $\fB$. Also, we shall use
${\mathcal A}(G)$ to denote the set of {\em special annihilator
submodules of $G$}, that is, the set of $R[x,f]$-submodules of $G$
of the form $\ann_G(\fA)$, where $\fA$ is a graded two-sided ideal
of $R[x,f]$. In \cite[\S 1]{ga}, the present author showed that
there is a sort of `Galois' correspondence between ${\mathcal I}(G)$
and ${\mathcal A}(G)$. In more detail, there is an order-reversing
bijection, $\Delta : \mathcal{A}(G) \lra \mathcal{I}(G)$ given by \[
\Delta : N \longmapsto \left(\grann_{R[x,f]}N\right)\cap R =
(0:_RN). \] The inverse bijection, $\Delta^{-1} : \mathcal{I}(G)
\lra \mathcal{A}(G),$ also order-reversing, is given by \[
\Delta^{-1} : \fb \longmapsto \ann_G\left(\fb R[x,f])\right). \]

We shall be mainly concerned in this paper with the situation where
$R$ is $F$-pure. We remind the reader what this means. For $j \in
\N$ (the set of positive integers) and an $R$-module $M$, let
$M^{(j)}$ denote $M$ considered as a left $R$-module in the natural
way and as a right $R$-module via $f^j$, the $j$th iterate of the
Frobenius ring homomorphism. Then $R$ is {\em $F$-pure\/} if, for
every $R$-module $M$, the natural map $M \lra R^{(1)} \otimes_R M$
(which maps $m \in M$ to $1 \otimes m$) is injective.

Note that $R^{(j)} \cong Rx^j$ as $(R,R)$-bimodules. Let $i\in\nn$,
the set of non-negative integers. When we endow $Rx^i$ and $Rx^j$
with their natural structures as $(R,R)$-bimodules (inherited from
their being graded components of $R[x,f]$), there is an isomorphism
of (left) $R$-modules $\phi : Rx^{i+j}\otimes_R M
\stackrel{\cong}{\lra} Rx^{i}\otimes_R(Rx^{j}\otimes_R M)$ for which
$\phi(rx^{i+j} \otimes m) = rx^i \otimes (x^j \otimes m)$ for all $r
\in R$ and $m \in M$. It follows that $R$ is $F$-pure if and only if
the left $R[x,f]$-module $R[x,f]\otimes_RM$ is $x$-torsion-free for
every $R$-module $M$.  This means that, when $R$ is $F$-pure, there
is a good supply of natural $x$-torsion-free left $R[x,f]$-modules.

In fact, we shall use $\Phi$ (or $\Phi_R$ when it is desirable to
specify which ring is being considered) to denote the functor
$R[x,f]\otimes_R \;{\scriptscriptstyle \bullet}\;$ from the category
of $R$-modules (and all $R$-homomorphisms) to the category of all
$\nn$-graded left $R[x,f]$-modules (and all homogeneous
$R[x,f]$-homomorphisms). For an $R$-module $M$, we shall identify
$\Phi(M)$ with $\bigoplus_{n\in\nn}Rx^n\otimes_RM$, and (usually)
identify its $0$th component $R\otimes_RM$ with $M$, in the obvious
ways.

Let $E$ be the injective envelope of the simple $R$-module $R/\fm$.
We shall be concerned with $\Phi(E)$, the $\nn$-graded left
$R[x,f]$-module $\bigoplus_{n\in\nn}Rx^n\otimes_RE$. Assume now that
$R$ is $F$-pure. In \cite[Corollary 4.11]{Fpurhastest}, the present
author proved that the set ${\mathcal I}(\Phi(E))$ is a finite set
of radical ideals of $R$; in \cite[Theorem 3.6 and Corollary
3.7]{ga}, he proved that ${\mathcal I}(\Phi(E))$ is closed under
taking primary (prime in this case) components; and in
\cite[Corollary 2.8]{hbf}, he proved that the big test ideal
$\widetilde{\tau}(R)$ of $R$ (for tight closure) is equal to the
smallest member of ${\mathcal I}(\Phi(E))$ that meets $R^{\circ}$,
the complement in $R$ of the union of the minimal prime ideals of
$R$.

Let $\fa \in {\mathcal I}(\Phi(E))$ (with $\fa \neq R$), still in
the $F$-pure case. The special annihilator submodule
$\ann_{\Phi(E)}(\fa R[x,f])$ of $\Phi(E)$ corresponding to $\fa$
inherits a natural structure as a graded left module over the
Frobenius skew polynomial ring $(R/\fa)[x,f]$, and its $0$th
component is contained in $(0:_E\fa)$. As $R/\fa$-module, the latter
is isomorphic to the injective envelope of the simple
$R/\fa$-module. Motivated by results in \cite[\S 3]{hbf} in the case
where $R$ is complete, and by work of K. Schwede in \cite[\S
5]{Schwe10} in the $F$-finite case, we say that $\fa$ is {\em fully
$\Phi(E)$-special\/} if (it is $\Phi(E)$-special and) its $0$th
component is exactly $(0:_E\fa)$. The main result of this paper is
that a $\Phi(E)$-special ideal of $R$ is always fully
$\Phi(E)$-special provided that $R$ is an ($F$-pure) homomorphic
image of an excellent regular local ring of characteristic $p$. When
$R$ satisfies this condition, corollaries can be drawn from that
main result: we shall establish an analogue of \cite[Theorem
3.1]{hbf} and, in particular, show that $R/\fa$ is $F$-pure whenever
$\fa$ is a proper $\Phi(E)$-special ideal of $R$.

Along the way, we shall show that, in the case where $R$ is
$F$-finite as well as $F$-pure, the set ${\mathcal I}(\Phi(E))$ of
$\Phi(E)$-special ideals of $R$ is equal to the set of {\em
uniformly $F$-compatible ideals\/} of $R$, introduced by K. Schwede
in \cite[\S 3]{Schwe10}. An ideal $\fb$ of $R$ is said to be {\em
uniformly $F$-compatible\/} if, for every $j > 0$ and every $\phi
\in \Hom_R(R^{(j)}, R)$, we have $\phi(\fb^{(j)}) \subseteq \fb$. In
\cite[Corollary 5.3 and Corollary 3.3]{Schwe10}, Schwede proved that
there are only finitely many uniformly $F$-compatible ideals of $R$
and that they are all radical; in \cite[Proposition 4.7 and
Corollary 4.8]{Schwe10}, he proved that the set of uniformly
$F$-compatible ideals is closed under taking primary (prime in this
case) components; in \cite[Theorem 6.3]{Schwe10}, Schwede proved
that the big test ideal $\widetilde{\tau}(R)$ of $R$ is equal to the
smallest uniformly $F$-compatible ideal of $R$ that meets
$R^{\circ}$; and in \cite[Remark 4.4 and Proposition 4.7]{Schwe10},
he proved that there is a unique largest proper uniformly
$F$-compatible ideal of $R$, and that that is prime and equal to the
splitting prime of $R$ discovered and defined by I. M. Aberbach and
F. Enescu \cite[\S3]{AbeEne05}.

Thus, in the $F$-finite $F$-pure case, the set of uniformly
$F$-compatible ideals of $R$ has properties similar to some
properties of ${\mathcal I}(\Phi(E))$. Are the two sets the same? We
shall, during the course of the paper, show that the answer is
'yes'.  It should be emphasized, however, that Schwede only defined
uniformly $F$-compatible ideals in the $F$-finite case, whereas the
majority of this paper is devoted to the study of fully
$\Phi(E)$-special ideals in the ($F$-pure but) not necessarily
$F$-finite case.

We shall use the notation of this Introduction throughout the
remainder of the paper. In particular, $R$ will denote a local ring
of prime characteristic $p$ having maximal ideal $\fm$. We shall
sometimes use the notation $(R,\fm)$ just to remind the reader that
$R$ is local. The completion of $R$ will be denoted by ${\widehat
R}$. We shall only assume that $R$ is reduced, or $F$-pure, or
$F$-finite, when there is an explicit statement to that effect; also
$E$ will continue to denote $E_R(R/\fm)$. We continue to use $\N$,
respectively $\nn$, to denote the set of all positive, respectively
non-negative, integers.

For $j \in \nn$, the $j$th component of an $\nn$-graded left
$R[x,f]$-module $G$ will be denoted by $G_j$.

\section{Fully $\Phi(E)$-special ideals}
\label{fsi}

We remind the reader that we usually identify the $0$th component of
$\Phi(E) = \bigoplus_{n\in\nn}Rx^n\otimes_RE$ with $E$ in the
obvious natural way. For an ideal $\fa$ of $R$, we have, with this
convention, that the $0$th component of $\ann_{\Phi(E)}(\fa R[x,f])$
is contained in $(0:_E\fa)$.

\begin{lem}
\label{fsi.1} Assume that $(R,\fm)$ is $F$-pure; let $\fa$ be an
ideal of $R$. Then the $0$th component $(\ann_{\Phi(E)}(\fa
R[x,f]))_0$ of $\ann_{\Phi(E)}(\fa R[x,f])$ contains $(0:_E\fa)$ if
and only if $\fa$ is $\Phi(E)$-special and $(\ann_{\Phi(E)}(\fa
R[x,f]))_0 = (0:_E\fa)$.
\end{lem}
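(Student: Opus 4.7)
The backward implication is immediate: if $\fa$ is $\Phi(E)$-special and $(\ann_{\Phi(E)}(\fa R[x,f]))_0 = (0:_E\fa)$, then in particular the $0$th component contains $(0:_E\fa)$.

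For the forward implication, set $N := \ann_{\Phi(E)}(\fa R[x,f])$ to ease notation. Since $\fa$ annihilates $N$, it annihilates $N_0 \subseteq E$, so $N_0 \subseteq (0:_E\fa)$ holds unconditionally (as the paragraph preceding the lemma already recorded). Combined with the hypothesis $N_0 \supseteq (0:_E\fa)$, this already secures the equality $N_0 = (0:_E\fa)$. What remains is to show that $\fa$ lies in $\mathcal{I}(\Phi(E))$. Since $\mathcal{I}(\Phi(E))$ is, by definition, the set of $R$-annihilators of $R[x,f]$-submodules of $\Phi(E)$, and since $N$ itself is such a submodule, it will suffice to establish the equality $(0:_R N) = \fa$.

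The inclusion $\fa \subseteq (0:_R N)$ is built into the construction of $N$. For the reverse, let $r \in (0:_R N)$; then $r$ annihilates $N_0$, which we have just identified with $(0:_E\fa)$. The key ingredient is the Matlis-theoretic fact that, viewed as an $R/\fa$-module, $(0:_E\fa)$ is an injective envelope of the residue field $R/\fm$ of the local ring $R/\fa$. In particular, the $R/\fa$-annihilator of $(0:_E\fa)$ is zero: $R/\fa$ embeds into its completion $\widehat{R/\fa} \cong \Hom_{R/\fa}\!\bigl((0:_E\fa),(0:_E\fa)\bigr)$, so no nonzero element of $R/\fa$ can act as zero on $(0:_E\fa)$. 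Lifting to $R$, this says $\bigl(0:_R(0:_E\fa)\bigr) = \fa$, and hence $r \in \fa$.

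Combining the two inclusions gives $\fa = (0:_R N) \in \mathcal{I}(\Phi(E))$, so $\fa$ is $\Phi(E)$-special, and the equality $N_0 = (0:_E\fa)$ was already secured. There is no real obstacle here; the only step that is not purely formal from the definitions is the Matlis computation $\bigl(0:_R(0:_E\fa)\bigr) = \fa$, and the $F$-pure hypothesis intervenes only inasmuch as it guarantees, via the results of \cite{ga} recalled in the introduction, that $\Phi(E)$ is $x$-torsion-free and so that $\mathcal{I}(\Phi(E))$ and $\mathcal{A}(\Phi(E))$ have their expected properties.
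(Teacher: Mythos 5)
Your proof is correct and follows essentially the same strategy as the paper's: the crux in both is the Matlis-duality fact that $(0:_R(0:_E\fa)) = \fa$. The paper introduces the auxiliary $R[x,f]$-submodule $J$ generated by $(0:_E\fa)$ inside $\ann_{\Phi(E)}(\fa R[x,f])$, invokes \cite[Lemma 1.9]{ga} to identify $\grann_{R[x,f]}J$, and then sandwiches $\fa$ between $\fa$ and $(0:_RJ)$; you instead work directly with $N = \ann_{\Phi(E)}(\fa R[x,f])$ and show $\fa = (0:_RN)$, which is marginally more economical since it avoids $J$ and the structure lemma from \cite{ga} and relies only on the definition of $\mathcal{I}(\Phi(E))$. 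Both are fine; yours just makes it a bit clearer that, once the Matlis fact is in hand, the argument is purely formal.
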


\begin{proof} Only the implication `$\Rightarrow$' needs proof.

Assume that $(0:_E\fa) \subseteq (\ann_{\Phi(E)}(\fa R[x,f]))_0$.
Since $\ann_{\Phi(E)}(\fa R[x,f])$ is an $R[x,f]$-submodule of
$\Phi(E)$, it follows that $\ann_{\Phi(E)}(\fa R[x,f])$ contains the
image $J$ of the map
$$\Phi((0:_E\fa)) = R[x,f]\otimes_R(0:_E\fa) \lra R[x,f]\otimes_RE =
\Phi(E)$$ induced by inclusion. Let $\fb$ be the radical ideal of
$R$ for which $\grann_{R[x,f]}J = \fb R[x,f]$, so that $\fb =
(0:_RJ)$. As $J \subseteq \ann_{\Phi(E)}(\fa R[x,f])$, we must have
$\fa \subseteq \fb$. Furthermore, $\fb$ annihilates $(0:_E\fa) \cong
\Hom_R(R/\fa,E)$, and since an $R$-module and its Matlis dual have
the same annihilator, we also have $\fb \subseteq \fa$. Thus $\fa =
\fb$ is the $R$-annihilator of an $R[x,f]$-submodule of $\Phi(E)$,
and so $\fa \in {\mathcal I}(\Phi(E))$.

Finally, note that an $e \in (\ann_{\Phi(E)}(\fa R[x,f]))_0$ must be
annihilated by $\fa$, and so lies in $(0:_E\fa)$.
\end{proof}

\begin{defi}
\label{fsi.2} Assume that $(R,\fm)$ is $F$-pure; let $\fa$ be an
ideal of $R$. We say that $\fa$ is {\em fully $\Phi(E)$-special\/}
if the equivalent conditions of Lemma \ref{fsi.1} are satisfied.

Thus $\fa$ is fully $\Phi(E)$-special if and only if $(0:_E\fa)
\subseteq (\ann_{\Phi(E)}(\fa R[x,f]))_0$, and, then, $\fa$ is
$\Phi(E)$-special and we have the equality $ (0:_E\fa) =
(\ann_{\Phi(E)}(\fa R[x,f]))_0. $
\end{defi}

To facilitate the presentation of some examples of $\Phi(E)$-special
ideals that are fully $\Phi(E)$-special, we review next the theory
of $S$-tight closure, where $S$ is a multiplicatively closed subset
of $R$.  This theory was developed in \cite{hbf}. The special case
of the theory in which $S = R^{\circ}$ is the `classical' tight
closure theory of M. Hochster and C. Huneke \cite{HocHun90}.

\begin{rmds}
\label{hbfsect1} Let $H$ be a left $R[x,f]$-module and let $S$ be a
multiplicatively closed subset of $R$.
\begin{enumerate}
\item We define the {\em internal $S$-tight closure of zero in $H$,\/}
denoted $\Delta^S(H)$, to be the $R[x,f]$-submodule of $H$ given by
$$
\Delta^S(H)= \left\{ h \in H : \text{there exists~} s \in S
\text{~with~} sx^nh = 0 \text{~for all~} n \gg 0 \right\}.
$$
When $M$ is an $R$-module and we take the graded left
$R[x,f]$-module $\Phi(M) = R[x,f]\otimes_RM$ for $H$, the
$R[x,f]$-submodule $\Delta^S(\Phi(M))$ of $\Phi(M)$ is graded, and
we refer to its $0$th component as the {\em $S$-tight closure of\/
$0$ in $M$}, or the {\em tight closure with respect to $S$ of\/ $0$
in $M$}, and denote it by $0^{*,S}_M$. See \cite[\S 1]{hbf}.

\item By \cite[Example 1.3(ii)]{hbf}, we have, for an $R$-module
$M$,
$$\Delta^S(R[x,f]\otimes_RM) = 0^{*,S}_M \oplus 0^{*,S}_{Rx
\otimes_RM} \oplus \cdots \oplus 0^{*,S}_{Rx^n \otimes_RM}\oplus
\cdots.
$$

\item Recall that an {\em $S$-test element for $R$} is an element $s
\in S$ such that, for every $R$-module $M$ and every $j \in \nn$,
the element $sx^j$ annihilates $1 \otimes m \in (\Phi(M))_0$ for
every $m \in 0^{*,S}_M$. The ideal of $R$ generated by all the
$S$-test elements for $R$ is called the {\em $S$-test ideal of $R$},
and denoted by $\tau^S(R)$.
\end{enumerate}
\end{rmds}

\begin{rmds}
\label{hbf1.5} Suppose that $(R,\fm)$ is $F$-pure. Let $S$ be a
multiplicatively closed subset of $R$. Recall that the set
${\mathcal I}(\Phi(E))$ of $\Phi(E)$-special $R$-ideals is finite;
let $\fb^{S,\Phi(E)}$ denote the intersection of all the minimal
members of the set $$\left\{ \fp \in \Spec(R) \cap
\mathcal{I}(\Phi(E)) : \fp \cap S \neq \emptyset\right\}.$$ Thus
$\fb^{S,\Phi(E)}$ is the smallest member of $\mathcal{I}(\Phi(E))$
that meets $S$.

\begin{enumerate}
\item By \cite[Theorem 2.6]{hbf}, the set $S \cap
\fb^{S,\Phi(E)}$ is (non-empty and) equal to the set of $S$-test
elements for $R$.
\item Thus there exists an $S$-test element
for $R$.
\item Furthermore, $\Delta^S(\Phi(E)) =
\ann_{\Phi(E)}(\fb^{S,\Phi(E)}R[x,f])$ and $(0:_R\Delta^S(\Phi(E)))
= \fb^{S,\Phi(E)}$, by \cite[Proposition 1.5]{hbf}.
\item By \cite[Proposition 2.10(v)]{hbf}, we have $\fb^{S,\Phi(E)} =
(0:_R0^{*,S}_E)$.
\end{enumerate}
\end{rmds}

\begin{lem}
[Sharp {\cite[Corollary 2.8]{hbf}}]  \label{hbf1.6}

Suppose that $(R,\fm)$ is $F$-pure. Let $S$ be the complement in $R$
of the union of finitely many prime ideals.

Then the $S$-test ideal $\tau^S(R)$ is equal to $\fb^{S,\Phi(E)}$,
the smallest member of the finite set ${\mathcal I}(\Phi(E))$ that
meets $S$.
\end{lem}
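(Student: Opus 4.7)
My plan is to prove the two inclusions separately. The easy direction $\tau^S(R) \subseteq \fb^{S,\Phi(E)}$ is immediate from Remarks \ref{hbf1.5}(i), which identifies the set of $S$-test elements with $S \cap \fb^{S,\Phi(E)}$; hence the ideal they generate lies inside $\fb^{S,\Phi(E)}$. The substance of the argument is the reverse inclusion $\fb^{S,\Phi(E)} \subseteq \tau^S(R)$, which amounts to the claim that the radical ideal $\fb^{S,\Phi(E)}$ is generated by $\fb^{S,\Phi(E)} \cap S$.

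For this, I would write $S = R \setminus (\fq_1 \cup \cdots \cup \fq_k)$ and, after discarding any $\fq_j$ contained in some other $\fq_l$ (which does not change $S$), assume the $\fq_i$ are pairwise incomparable. Fix an $S$-test element $t \in S \cap \fb^{S,\Phi(E)}$, available by Remarks \ref{hbf1.5}(ii). Given $c \in \fb^{S,\Phi(E)}$, partition $\{1,\ldots,k\}$ into $A = \{i : c \notin \fq_i\}$ and $B = \{i : c \in \fq_i\}$, set $\fa = \bigcap_{i \in A} \fq_i$ (with the convention $\fa = R$ if $A = \emptyset$), and use prime avoidance to produce $r \in \fa \setminus \bigcup_{l \in B} \fq_l$; such an $r$ exists because pairwise incomparability forces $\fq_j \not\subseteq \fq_l$ for each $j \in A$ and $l \in B$, so a product of elements chosen from the $\fq_j \setminus \fq_l$ places $\fa$ outside $\fq_l$, and another application of prime avoidance then furnishes $r$. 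A direct check using primality shows that $c + rt$ avoids every $\fq_i$: for $i \in A$ one has $rt \in \fq_i$ while $c \notin \fq_i$; for $i \in B$ one has $c \in \fq_i$ while $r, t \notin \fq_i$, so $rt \notin \fq_i$. Hence $c + rt \in S \cap \fb^{S,\Phi(E)}$ is an $S$-test element, and $rt \in R \cdot t \subseteq \tau^S(R)$, giving $c = (c+rt) - rt \in \tau^S(R)$.

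The main obstacle is arranging the prime avoidance cleanly, and in particular the incomparability reduction on the $\fq_i$ is essential: without it one could have indices $j \in A$ and $l \in B$ with $\fq_j \subsetneq \fq_l$, and then any $r \in \fq_j$ would automatically lie in $\fq_l$, blocking $c + rt$ from escaping $\fq_l$. Once the correct $r$ is in hand, the remaining verification and the decomposition $c = (c+rt) - rt$ are routine.
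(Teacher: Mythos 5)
The paper does not prove this lemma: it is imported verbatim from \cite[Corollary 2.8]{hbf}, so there is no internal proof for your argument to match. On its own merits, though, your proof is correct and complete. The easy inclusion $\tau^S(R) \subseteq \fb^{S,\Phi(E)}$ indeed falls out of Reminders~\ref{hbf1.5}(i), and the real work is the observation that a radical ideal of $\mathcal{I}(\Phi(E))$ meeting $S = R \setminus (\fq_1 \cup \cdots \cup \fq_k)$ is generated by its intersection with $S$. Your prime-avoidance mechanism is sound: after passing to pairwise incomparable $\fq_i$, incomparability together with primality gives $\fa = \bigcap_{i \in A}\fq_i \not\subseteq \fq_l$ for every $l \in B$, so an $r \in \fa \setminus \bigcup_{l \in B}\fq_l$ exists, and the check that $c + rt$ lies in each $\fq_i$-complement works exactly as you say. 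The one essential subtlety you handle correctly is why one multiplies $r$ by a fixed $S$-test element $t$ rather than using $c + r$ directly: without $t$ one has no reason for $r$ (which deliberately lies inside every $\fq_i$ with $i \in A$) to lie in $\fb^{S,\Phi(E)}$ or in $\tau^S(R)$, whereas $rt \in Rt$ gives both memberships for free. An alternative would be to choose $r$ inside $\fa \cap \fb^{S,\Phi(E)}$ from the start (this is possible by primality of $\fq_l$, since $\fa \fb^{S,\Phi(E)} \not\subseteq \fq_l$), but then $r \notin S$ in general and the final step $rt \in \tau^S(R)$ would no longer be available, so your route through $t$ is the right one. This is the standard way to deduce the corollary from the identification of $S$-test elements with $S \cap \fb^{S,\Phi(E)}$, and it is almost certainly how the cited source argues as well.
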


We shall also use the following result from \cite{hbf}.

\begin{thm}
[Sharp {\cite[Theorem 2.12]{hbf}}] \label{hbf2.12} Suppose that
$(R,\fm)$ is $F$-pure. Let $\fa \in \mathcal{I}(\Phi(E))$. Then
there exists a multiplicatively closed subset $S$ of $R$ such that
$\fa$ is the $S$-test ideal of $R$. Moreover, $S$ can be taken to be
the complement in $R$ of the union of finitely many prime ideals.
\end{thm}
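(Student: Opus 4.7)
The plan is to invoke Lemma~\ref{hbf1.6}, which identifies the $S$-test ideal $\tau^S(R)$ with $\fb^{S,\Phi(E)}$, the smallest member of $\mathcal{I}(\Phi(E))$ that meets $S$. Thus it will suffice to construct a multiplicatively closed subset $S$ of $R$, of the prescribed form (the complement of a finite union of prime ideals), for which $\fa$ itself is the smallest member of $\mathcal{I}(\Phi(E))$ meeting $S$.

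The key structural input, recalled in the Introduction, is that $\mathcal{I}(\Phi(E))$ is finite, consists of radical ideals, and is closed under passage to prime (primary) components. Set $\mathcal{P} := \mathcal{I}(\Phi(E)) \cap \Spec(R)$; then every $\fc \in \mathcal{I}(\Phi(E))$ is the intersection of its finitely many minimal primes, each of which lies in $\mathcal{P}$. I would then take
\[
S := R \setminus \bigcup \bigl\{\fp \in \mathcal{P} : \fa \not\subseteq \fp\bigr\},
\]
which is the complement of finitely many primes, and therefore of the required form.

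To show that $\fa$ is the smallest member of $\mathcal{I}(\Phi(E))$ meeting $S$, I would first apply prime avoidance to the finite collection $\{\fp \in \mathcal{P} : \fa \not\subseteq \fp\}$ to conclude $\fa \cap S \neq \emptyset$. Conversely, given any $\fc \in \mathcal{I}(\Phi(E))$ with $\fc \cap S \neq \emptyset$, I would decompose $\fc = \fq_1 \cap \cdots \cap \fq_s$ into its minimal primes in $\mathcal{P}$ (the case $\fc = R$ being trivial). Since $\fc \subseteq \fq_i$ and $\fc$ meets $S$, each $\fq_i$ also meets $S$; as $\fq_i$ is itself a prime lying in $\mathcal{P}$, it cannot belong to the removed collection, so $\fa \subseteq \fq_i$. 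Intersecting over $i$ yields $\fa \subseteq \fc$, as required. Lemma~\ref{hbf1.6} then gives $\tau^S(R) = \fb^{S,\Phi(E)} = \fa$.

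I do not anticipate a serious obstacle here: the argument is essentially combinatorial, resting entirely on the closure of $\mathcal{I}(\Phi(E))$ under prime components together with prime avoidance. The only point that warrants separate mention is the degenerate case $\fa = R$, where $\{\fp \in \mathcal{P} : \fa \not\subseteq \fp\} = \mathcal{P}$; then $1 \in \fa \cap S$ while any proper $\fc \in \mathcal{I}(\Phi(E))$ is contained in some prime of $\mathcal{P}$ and hence fails to meet $S$, so the conclusion holds in this case too.
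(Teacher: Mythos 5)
Your proof is correct. The paper itself does not supply an argument here — Theorem~\ref{hbf2.12} is quoted from the earlier paper \cite{hbf} without proof — but your construction is the natural one given the surrounding apparatus: you reduce to Lemma~\ref{hbf1.6} (via the characterization of $\tau^S(R)$ as the smallest member of $\mathcal{I}(\Phi(E))$ meeting $S$), define $S$ as the complement of the finitely many prime members of $\mathcal{I}(\Phi(E))$ that fail to contain $\fa$, then use prime avoidance (for $\fa \cap S \neq \emptyset$) together with the prime-component closure of $\mathcal{I}(\Phi(E))$ (for minimality of $\fa$). Each step is justified by facts already recalled in the paper: $\mathcal{I}(\Phi(E))$ is a finite set of radical ideals, closed under passage to prime components, so $\mathcal{P}$ is finite and every $\fc \in \mathcal{I}(\Phi(E))$ is the intersection of its minimal primes, all lying in $\mathcal{P}$. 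Your treatment of $\fa = R$ is also correct, and the case $\fa = 0$ (where the removed collection is empty and $S = R$) is handled automatically by the general argument. I see no gap.
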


We are now able to give examples of fully $\Phi(E)$-special ideals,
because the next result shows that, when $(R,\fm)$ is complete and
$F$-pure, a $\Phi(E)$-special ideal of $R$ is automatically fully
$\Phi(E)$-special.

\begin{prop}
\label{res.2} Suppose that $(R,\fm)$ is complete and $F$-pure. Then
every $\Phi(E)$-special ideal of $R$ is fully $\Phi(E)$-special.
\end{prop}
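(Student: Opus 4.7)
The plan is to show that every $\Phi(E)$-special ideal $\fa$ can be realized as an $S$-test ideal for a suitable $S$, to translate that realization into an identification of $(\ann_{\Phi(E)}(\fa R[x,f]))_0$ with the tight-closure submodule $0^{*,S}_E$, and then to use Matlis duality (which is where completeness will enter) to identify the latter with $(0:_E\fa)$. By Lemma \ref{fsi.1}, this suffices.

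Concretely, I would let $\fa \in \mathcal{I}(\Phi(E))$ and apply Theorem \ref{hbf2.12} to produce a multiplicatively closed subset $S$ of $R$ (the complement of a finite union of prime ideals) with $\fa = \tau^S(R)$. Lemma \ref{hbf1.6} then gives $\fa = \tau^S(R) = \fb^{S,\Phi(E)}$. Reminders \ref{hbf1.5}(iii) yield
\[
\ann_{\Phi(E)}(\fa R[x,f]) = \Delta^S(\Phi(E)),
\]
so taking $0$th components and using the definition of $0^{*,S}_E$ recorded in Reminders \ref{hbfsect1}(i) produces $(\ann_{\Phi(E)}(\fa R[x,f]))_0 = 0^{*,S}_E$, while Reminders \ref{hbf1.5}(iv) give $\fa = (0:_R 0^{*,S}_E)$.

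To close the argument I would invoke Matlis duality, which is where the completeness of $R$ is essential: any $R$-submodule $N$ of $E$ has Matlis dual $\Hom_R(N,E)$ isomorphic to the cyclic module $R/(0:_R N)$ (as it is the image of the map $R = \Hom_R(E,E) \to \Hom_R(N,E)$ dual to the inclusion $N \hookrightarrow E$), whence $N = (0:_E(0:_R N))$. Applying this identity with $N = 0^{*,S}_E$ gives $0^{*,S}_E = (0:_E\fa)$. Combining,
\[
(\ann_{\Phi(E)}(\fa R[x,f]))_0 = 0^{*,S}_E = (0:_E\fa),
\]
and a fortiori $(0:_E\fa) \subseteq (\ann_{\Phi(E)}(\fa R[x,f]))_0$; by Lemma \ref{fsi.1}, $\fa$ is fully $\Phi(E)$-special.

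No single step looks like a serious obstacle here, given the tools already developed in the excerpt; the substantive input is Theorem \ref{hbf2.12} (which supplies the $S$ realizing $\fa$ as an $S$-test ideal), and the only place where the completeness hypothesis is used is in the closing Matlis-duality step that upgrades the automatic inclusion $0^{*,S}_E \subseteq (0:_E\fa)$ to an equality. The key conceptual point to articulate cleanly is precisely this: without completeness, an Artinian submodule of $E$ need not be recoverable from its $R$-annihilator, and the proposition would fail.
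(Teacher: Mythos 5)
Your argument is correct and follows the paper's own proof essentially verbatim: invoke Theorem \ref{hbf2.12} (with Lemma \ref{hbf1.6}) to write $\fa = \tau^S(R) = \fb^{S,\Phi(E)}$, use Reminders \ref{hbf1.5}(iii)--(iv) and \ref{hbfsect1}(ii) to identify $(\ann_{\Phi(E)}(\fa R[x,f]))_0$ with $0^{*,S}_E$ and $\fa$ with $(0:_R 0^{*,S}_E)$, and then use Matlis duality over the complete $R$ to close the loop. The only (cosmetic) omission is that the paper first disposes of the trivial case $\fa = R$ before applying Theorem \ref{hbf2.12} to a proper ideal.
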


\begin{proof} Let $\fa$ be a $\Phi(E)$-special ideal of $R$. If $\fa = R$,
then $$(0:_E\fa) = 0 \subseteq \ann_{\Phi(E)}(\fa R[x,f])$$ and
$\fa$ is fully $\Phi(E)$-special. We therefore assume that $\fa$ is
proper.

By Theorem \ref{hbf2.12} and \cite[Corollary 2.8]{hbf}, there exist
finitely many prime ideals $\fp_1, \ldots, \fp_n$ of $R$ such that,
if we set $S := R\setminus \bigcup_{i=1}^n\fp_i$, then $\fa$ is the
$S$-test ideal of $R$, that is $ \fa = \tau^S(R) = \fb^{S,\Phi(E)},$
where the notation is as in \ref{hbfsect1}(iii) and \ref{hbf1.5}.
Therefore, by \ref{hbfsect1}(ii) and \ref{hbf1.5}(iii),
\begin{align*}
0^{*,S}_E \oplus 0^{*,S}_{Rx \otimes_RE} \oplus \cdots \oplus
0^{*,S}_{Rx^n \otimes_RE}\oplus \cdots & = \Delta^S(\Phi(E))\\  & =
\ann_{\Phi(E)}(\fb^{S,\Phi(E)}R[x,f]).
\end{align*}
Now we know that $\fb^{S,\Phi(E)} = (0:_R0^{*,S}_E)$, by
\ref{hbf1.5}(iv). Since $R$ is complete, it follows from Matlis
duality (see, for example, \cite[p.\ 154]{SV}) that $0^{*,S}_E =
(0:_E\fb^{S,\Phi(E)})$. We have thus shown that $(0:_E\fa) = R
\otimes_R(0:_E\fa) \subseteq (\ann_{\Phi(E)}(\fa R[x,f]))_0$. Thus
$\fa$ is fully $\Phi(E)$-special.
\end{proof}

Next, we develop some theory for fully $\Phi(E)$-special ideals.

\begin{lem}
\label{res.5z} Suppose that $(R,\fm)$ is $F$-pure, and let $\fa$ be
a fully $\Phi(E)$-special ideal of $R$. Then $\fa$ is radical and
every associated prime of $\fa$ is also fully $\Phi(E)$-special.
\end{lem}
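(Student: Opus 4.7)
The first step is the easy one: since $\fa$ is fully $\Phi(E)$-special it is in particular $\Phi(E)$-special, so $\fa = (0:_R M)$ for the $R[x,f]$-submodule $M := \ann_{\Phi(E)}(\fa R[x,f])$ of the $x$-torsion-free module $\Phi(E)$, and the result of \cite[Lemma 1.9]{ga} recalled in the Introduction tells us that such an annihilator ideal is radical. Hence the associated primes of $\fa$ are exactly its minimal primes $\fp_1,\ldots,\fp_n$. Moreover, each $\fp_i$ belongs to $\mathcal{I}(\Phi(E))$ by the closure of $\mathcal{I}(\Phi(E))$ under taking prime components (\cite[Theorem 3.6 and Corollary 3.7]{ga}). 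What must be proved is therefore the ``fully'' part for each $\fp_i$: fixing $i$, given $e \in (0:_E \fp_i)$, $r \in \fp_i$ and $j \in \nn$, I must show that $rx^j \otimes e = 0$ in $Rx^j \otimes_R E$.

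The strategy is to leverage prime avoidance together with the hypothesis on $\fa$. Since the $\fp_k$ are pairwise incomparable, pick $c \in \bigcap_{k\neq i}\fp_k$ with $c \notin \fp_i$. The central observation is that multiplication by $c$ is \emph{surjective} on $(0:_E \fp_i)$. To see this, note that $(0:_E \fp_i) \cong \Hom_R(R/\fp_i,E)$ is an injective $R/\fp_i$-module (since by Hom-tensor adjunction the functor $\Hom_{R/\fp_i}(-,(0:_E\fp_i))$ on $R/\fp_i$-modules coincides with the exact functor $\Hom_R(-,E)$), and $R/\fp_i$ being a domain makes the image $\overline c$ a nonzerodivisor, so applying $\Hom_{R/\fp_i}(-,(0:_E\fp_i))$ to the injection $R/\fp_i \xrightarrow{\overline c} R/\fp_i$ gives the surjection. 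So I can write $e = ce'$ for some $e' \in (0:_E \fp_i)$.

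The final step is a one-line computation using the defining rule of $R[x,f]$: the identity $x^j c = c^{p^j} x^j$ gives
\[
rx^j \otimes e \;=\; rx^j \otimes ce' \;=\; (rx^j)\cdot c \otimes e' \;=\; rc^{p^j}x^j \otimes e'.
\]
Now $rc^{p^j} \in \fp_i \cdot \bigcap_{k\neq i}\fp_k \subseteq \bigcap_{k=1}^n \fp_k = \fa$, and $e' \in (0:_E \fp_i) \subseteq (0:_E \fa)$, so the hypothesis that $\fa$ is fully $\Phi(E)$-special forces $rc^{p^j}x^j \otimes e' = 0$, whence $rx^j \otimes e = 0$. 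The main obstacle is the surjectivity claim for multiplication by $c$; everything else is either formal or a short bimodule calculation, so recognizing and exploiting the $R/\fp_i$-injective structure on $(0:_E \fp_i)$ is really the crux of the argument.
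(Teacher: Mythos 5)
Your proof is correct and follows essentially the same route as the paper: in both arguments one chooses $c$ in the intersection of the other minimal primes but outside $\fp_i$, uses the injectivity of $E$ to show multiplication by $c$ is surjective on $(0:_E\fp_i)$, writes $e=ce'$, and then exploits $x^jc=c^{p^j}x^j$ together with the hypothesis on $\fa$. The only cosmetic difference is in the surjectivity step: you pass to $(0:_E\fp_i)$ as an injective $R/\fp_i$-module via change of rings, whereas the paper applies $\Hom_R(-,E)$ directly to the injection $R/\fp_i\xrightarrow{c}R/\fp_i$; these are the same observation, and your intermediate appeal to $\mathcal{I}(\Phi(E))$ being closed under prime components is unnecessary since full $\Phi(E)$-speciality of $\fp_i$ already implies $\Phi(E)$-speciality.
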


\begin{proof} We can assume that $\fa$ is proper. Since $\fa$ is
$\Phi(E)$-special, it must be radical. Let $\fa = \fp_1 \cap \cdots
\cap\fp_t$ be the minimal primary (prime in this case) decomposition
of $\fa$, and let $i \in \{1, \ldots, t\}$.

Since $\fa$ is fully $\Phi(E)$-special, we have $(0:_{E}\fa)
\subseteq (\ann_{\Phi(E)}(\fa R[x,f]))_0$. Let $e \in (0:_E\fp_i)$
and let $r \in \fp_i$. We show that $rx^n$ annihilates the element
$1 \otimes e$ of the $0$th component of $\Phi(E)$. There exists
$$
a \in \bigcap^t_{\stackrel{\scriptstyle j=1}{j\neq i}} \fp_j
\setminus \fp_i.
$$
Now $(0:_E\fp_i) = a(0:_E\fp_i)$, because multiplication by $a$
provides a monomorphism of $R/\fp_i$ into itself and $E$ is
injective. Therefore $e = ae'$ for some $e' \in (0:_E\fp_i)$.
Therefore $rx^n\otimes e = rx^n\otimes ae' = ra^{p^n}x^n \otimes e'
= 0$ since $ra^{p^n} \in \fa$ and
$$(0:_E\fp_i)\subseteq (0:_{E}\fa) \subseteq
\ann_{\Phi(E)}(\fa R[x,f]).$$ Therefore $(0:_E\fp_i)\subseteq
(\ann_{\Phi(E)}(\fp_i R[x,f]))_0$ and $\fp_i$ is fully
$\Phi(E)$-special.
\end{proof}

\begin{prop}
\label{res.5} Suppose that $(R,\fm)$ is $F$-pure. Let
$(\fa_{\lambda})_{\lambda \in \Lambda}$ be a non-empty family of
fully $\Phi(E)$-special ideals of $R$. Then $\sum_{\lambda \in
\Lambda}\fa_{\lambda}$ is again fully $\Phi(E)$-special.
\end{prop}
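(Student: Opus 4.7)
The plan is to verify full $\Phi(E)$-speciality of $\fa := \sum_{\lambda \in \Lambda} \fa_\lambda$ via the equivalent condition supplied by Lemma \ref{fsi.1}: it suffices to establish the single containment
\[
(0 :_E \fa) \;\subseteq\; (\ann_{\Phi(E)}(\fa R[x,f]))_0.
\]

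First I would exploit the elementary identity $(0:_E \fa) = \bigcap_{\lambda \in \Lambda}(0:_E \fa_\lambda)$, which follows at once from $\fa = \sum_\lambda \fa_\lambda$. Fixing an element $e$ of this intersection, and identifying $E$ with the $0$th component of $\Phi(E)$ in the standing manner, the task reduces to showing that $1 \otimes e$ is annihilated by every monomial $ax^n$ with $a \in \fa$ and $n \in \nn$; such monomials span the two-sided ideal $\fa R[x,f]$ as a left $R$-module, so checking annihilation on them is enough.

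Next, any $a \in \fa$ admits a finite decomposition $a = a_{\lambda_1} + \cdots + a_{\lambda_k}$ with $a_{\lambda_i} \in \fa_{\lambda_i}$. Because each $\fa_{\lambda_i}$ is assumed fully $\Phi(E)$-special and $e \in (0:_E \fa) \subseteq (0:_E \fa_{\lambda_i})$, the definition supplies $e \in (\ann_{\Phi(E)}(\fa_{\lambda_i}R[x,f]))_0$, whence $a_{\lambda_i} x^n \otimes e = 0$ for every $n \in \nn$. Summing these $k$ vanishings yields $ax^n \otimes e = 0$, as required.

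I do not anticipate any real obstacle: once full $\Phi(E)$-speciality has been recast via Lemma \ref{fsi.1} as a containment of $0$th components, the argument is a direct, formal unwinding of the definitions, using only that $E$ is contravariant-linear in $\fa$ in the sense of $(0:_E \cdot)$ converting sums of ideals into intersections. The one small point to track is that it suffices to test annihilation on the monomial $R$-generators $\{ax^n : a \in \fa,\, n \in \nn\}$ of $\fa R[x,f]$, and this is automatic. Notably, the argument applies to an arbitrary (not necessarily finite) index set $\Lambda$.
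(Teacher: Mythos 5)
Your proof is correct and follows essentially the same route as the paper's: both invoke Lemma \ref{fsi.1} to reduce the claim to the containment $(0:_E\fa)\subseteq(\ann_{\Phi(E)}(\fa R[x,f]))_0$ and then use $(0:_E\sum_\lambda\fa_\lambda)=\bigcap_\lambda(0:_E\fa_\lambda)$ together with the hypothesis on each $\fa_\lambda$. The paper phrases the final step at the ideal level, via $\fa R[x,f]=\sum_\lambda\fa_\lambda R[x,f]$ and the fact that the annihilator of a sum is the intersection of annihilators, whereas you unwind the same identity elementwise by writing $a\in\fa$ as a finite sum; these are interchangeable.
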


\begin{proof} Set $\fa := \sum_{\lambda \in
\Lambda}\fa_{\lambda}$, and observe that $\fa R[x,f] = \sum_{\lambda
\in \Lambda}(\fa_{\lambda} R[x,f])$. By assumption, we have
$(0:_E\fa_{\lambda}) \subseteq \ann_{\Phi(E)}(\fa_{\lambda} R[x,f])$
for all $\lambda \in \Lambda$. It follows that
\begin{align*}
(0:_E\fa) & = {\textstyle \left(0:_E\sum_{\lambda \in
\Lambda}\fa_{\lambda}\right) = \bigcap_{\lambda \in
\Lambda}\left(0:_E\fa_{\lambda}\right)}\\ & \subseteq {\textstyle
\bigcap_{\lambda \in \Lambda}(\ann_{\Phi(E)}(\fa_{\lambda} R[x,f])})_0\\
& = {\textstyle \left(\ann_{\Phi(E)}\left(\sum_{\lambda \in
\Lambda}\left(\fa_{\lambda} R[x,f]\right)\right)\right)_0 =
(\ann_{\Phi(E)}(\fa R[x,f]))_0}.
\end{align*}
Therefore $\fa := \sum_{\lambda \in \Lambda}\fa_{\lambda}$ is fully
$\Phi(E)$-special.
\end{proof}

\begin{cor}
\label{res.6} Suppose that $(R,\fm)$ is $F$-pure. Then $R$ has a
unique largest fully $\Phi(E)$-special proper ideal, and this is
prime.
\end{cor}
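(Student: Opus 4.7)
The plan is to define $\fa$ as the sum of all proper fully $\Phi(E)$-special ideals of $R$ and verify that $\fa$ has the required properties.

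First I would show that this sum is finite. Every fully $\Phi(E)$-special ideal is $\Phi(E)$-special by Definition \ref{fsi.2}, so the fully $\Phi(E)$-special ideals form a subset of the finite set $\mathcal{I}(\Phi(E))$ recalled in the Introduction (from \cite[Corollary 4.11]{Fpurhastest}). The family of proper fully $\Phi(E)$-special ideals is also non-empty (for example, the zero ideal belongs to it, since $(0:_E 0) = E = (\Phi(E))_0 = (\ann_{\Phi(E)}(0\cdot R[x,f]))_0$), so I can list it as $\fa_1, \ldots, \fa_n$. Proposition \ref{res.5} then yields that $\fa := \fa_1 + \cdots + \fa_n$ is fully $\Phi(E)$-special. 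Because $R$ is local and every $\fa_i$ lies in $\fm$, so does $\fa$; hence $\fa$ is proper, and by construction it is the unique largest proper fully $\Phi(E)$-special ideal.

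To show $\fa$ is prime, I would apply Lemma \ref{res.5z}: $\fa$ is radical, and if $\fa = \fp_1 \cap \cdots \cap \fp_t$ is its minimal prime decomposition, then each $\fp_i$ is fully $\Phi(E)$-special. Each $\fp_i$ is proper since it contains the proper ideal $\fa$, so maximality of $\fa$ forces $\fp_i \subseteq \fa$; combined with the reverse inclusion $\fa \subseteq \fp_i$, this gives $\fp_i = \fa$ for every $i$. Hence $t = 1$ and $\fa$ is prime.

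I do not anticipate a serious obstacle. The subtlety worth flagging is the properness of $\fa$: this is where the local hypothesis on $R$ is essential, since in a non-local setting a finite sum of proper ideals can easily exhaust the ring.
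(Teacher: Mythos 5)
Your proof is correct and follows essentially the same route as the paper's: form the sum of all proper fully $\Phi(E)$-special ideals, apply Proposition \ref{res.5} to see it is fully $\Phi(E)$-special (and contained in $\fm$, hence proper), and then invoke Lemma \ref{res.5z} to deduce primality. The finiteness discussion is harmless but unnecessary, since Proposition \ref{res.5} already handles an arbitrary non-empty family, and a sum of ideals each contained in $\fm$ is automatically contained in $\fm$.
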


\begin{proof} The zero ideal is fully $\Phi(E)$-special, and so
it follows from Proposition \ref{res.5} that the sum $\fb$ of all
the fully $\Phi(E)$-special proper ideals of $R$ is fully
$\Phi(E)$-special (and contained in $\fm$), and so is the unique
largest fully $\Phi(E)$-special proper ideal of $R$. Also $\fb$ must
be prime, since all the associated primes of $\fb$ are fully
$\Phi(E)$-special, by Lemma \ref{res.5z}.
\end{proof}

In what follows, we shall have cause to pass between $R$ and its
completion. Note that if $R$ is $F$-pure, then so too is
$\widehat{R}$, by Hochster and Roberts \cite[Corollary
6.13]{HocRob74}. The following technical lemma will be helpful.

\begin{lem}
\label{lm.3} {\rm (See \cite[Lemma 4.3]{btctefsnrer}.)} There is a
unique way of extending the $R$-module structure on $E :=
E_R(R/\fm)$ to an ${\widehat R}$-module structure. Recall that, as
an $\widehat{R}$-module, $E \cong
E_{\widehat{R}}(\widehat{R}/\widehat{\fm})$.

Since each element of $\Phi_R(E) = R[x,f]\otimes_RE$ is annihilated
by some power of $\fm$, the left $R[x,f]$-module structure on
$\Phi_R(E)$ can be extended in a unique way to a left ${\widehat
R}[x,f]$-module structure.

The map $\beta : \Phi_R(E) = R[x,f]\otimes_RE \lra
\widehat{R}[x,f]\otimes_{\widehat{R}}E = \Phi_{\widehat{R}}(E)$ for
which
$$\beta(rx^i \otimes h) = rx^i \otimes h \quad \text{for all~} r \in R,\ i \in
\nn \text{~and~} h \in E$$ is a homogeneous
$\widehat{R}[x,f]$-isomorphism.

Since each element of $\Phi_{R}(E)$ is annihilated by some power of
$\fm$, it follows that a subset of $\Phi_{R}(E)$ is an
$R[x,f]$-submodule if and only if it is an
$\widehat{R}[x,f]$-submodule. Consequently, $${\mathcal
I}_R(\Phi_R(E)) = \left\{ \fB \cap R : \fB \in {\mathcal
I}_{\widehat{R}}(\Phi_{\widehat{R}}(E)) \right\}.$$
\end{lem}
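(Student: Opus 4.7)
The plan is to use systematically that both $E$ and $\Phi_R(E)$ are $\fm$-torsion, so that their $R$-module structures factor through $R/\fm^n$, which equals $\widehat{R}/\widehat{\fm}^n$, and therefore lift canonically to $\widehat{R}$. First I would verify $\fm$-torsion on a generator $rx^n \otimes e$ of $\Phi_R(E)$: if $\fm^k e = 0$, then the bimodule identity $rs^{p^n}x^n \otimes e = rx^n \otimes se$ (coming from $x^n s = s^{p^n}x^n$) shows that left multiplication by $s^{p^n}$ kills $rx^n \otimes e$ for every $s \in \fm^k$. Hence $rx^n \otimes e$ is annihilated by the ideal $(s_1^{p^n}, \ldots, s_t^{p^n})$, where $s_1, \ldots, s_t$ generate $\fm^k$, and this ideal is $\fm$-primary, so contains some power $\fm^N$. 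Finite sums inherit the property. With this in hand, the unique $\widehat{R}$-action on $\Phi_R(E)$ is standard: for $\hat{r} \in \widehat{R}$ and $\xi \in \Phi_R(E)$ with $\fm^N \xi = 0$, choose $r \in R$ with $\hat{r} - r \in \widehat{\fm}^N$ and set $\hat{r}\xi := r\xi$; well-definedness and uniqueness follow from $R/\fm^N = \widehat{R}/\widehat{\fm}^N$. The same scheme extends the $R$-action on $E$ to an $\widehat{R}$-action, and the identification $E \cong E_{\widehat{R}}(\widehat{R}/\widehat{\fm})$ is a standard fact of Matlis duality for Artinian-like modules.

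Next, to promote the extended $R$-action on $\Phi_R(E)$ to an $\widehat{R}[x,f]$-module structure, I would verify the skew-commutation $x\hat{r} = \hat{r}^p x$ on $\fm$-torsion elements. Since the Frobenius on $\widehat{R}$ extends that on $R$, approximating $\hat{r}$ by $r \in R$ modulo a sufficiently large power of $\widehat{\fm}$ (large enough to kill the element under consideration and its image under $x$) reduces both sides to $xr = r^p x$, which holds in $R[x,f]$. For the comparison map $\beta$: on the $n$th graded component it is the canonical base-change $Rx^n \otimes_R E \to \widehat{R}x^n \otimes_{\widehat{R}} E$. Surjectivity is immediate, since any $\hat{r}x^n \otimes e$ equals $rx^n \otimes e$ once $r \in R$ is chosen so that $\hat{r} - r$ lies in a sufficiently high power of $\widehat{\fm}$ (so that $(\hat{r}-r)x^n \otimes e$ vanishes by the same right-action identity $(\hat{r}-r)x^n \otimes e = x^n \otimes (\hat{r}-r)^{p^n}e$). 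Injectivity follows from the identification $\widehat{R} \otimes_R E \cong E$, a consequence of writing $E$ as the union of its finite-length submodules $(0:_E\fm^n)$ and using $\widehat{R}/\widehat{\fm}^n = R/\fm^n$ together with the fact that tensor commutes with direct limits. Homogeneity and $\widehat{R}[x,f]$-linearity of $\beta$ then drop out of the constructions.

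The final assertion about $\mathcal{I}_R(\Phi_R(E))$ is then essentially formal. Any $R[x,f]$-submodule $T$ of $\Phi_R(E)$ is automatically $\widehat{R}[x,f]$-stable: for $\xi \in T$ killed by $\fm^N$ and $\hat{r} \in \widehat{R}$, one has $\hat{r}\xi = r\xi \in T$ for any $r \in R$ with $\hat{r} - r \in \widehat{\fm}^N$; conversely every $\widehat{R}[x,f]$-submodule is $R[x,f]$-stable because $R \subseteq \widehat{R}$. Hence under $\beta$ the two submodule lattices coincide, and if $N$ corresponds to such a common submodule then its $R$-annihilator is $\ann_{\widehat{R}}(N) \cap R$, giving the stated formula. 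The step demanding real care is the compatibility of the extended actions with the skew-polynomial relation $xr = r^p x$ and the injectivity of $\beta$ on each graded piece; once those are in place, everything else follows by routine manipulation of the $\fm$-torsion property.
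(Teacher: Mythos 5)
The paper does not prove this lemma; it simply cites \cite[Lemma 4.3]{btctefsnrer}, so there is no in-text proof to compare against. Your overall strategy --- reducing everything to the fact that $E$ and hence $\Phi_R(E)$ are $\fm$-torsion, so that $\widehat R$-actions are forced by $\fm$-adic approximation and all the claimed identifications follow by filtered-colimit arguments over the finite-length layers $(0:_E\fm^k)$ --- is the standard and correct route to this statement, and the pieces about $\fm$-torsion of $\Phi_R(E)$, the forced $\widehat R$-structure, and the coincidence of $R[x,f]$- and $\widehat R[x,f]$-submodules are all handled soundly.

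There is, however, a genuine computational error in your surjectivity argument for $\beta$. You assert $(\hat r - r)x^n \otimes e = x^n \otimes (\hat r - r)^{p^n}e$, but the twisted-bimodule identity in $\widehat R x^n\otimes_{\widehat R}E$ runs the other way: $\hat a^{\,p^n}x^n\otimes e = x^n\hat a\otimes e = x^n\otimes \hat a e$, so one gets $(\hat r - r)^{p^n}x^n\otimes e = x^n\otimes(\hat r - r)e$, not your identity; arbitrary elements of a high power of $\widehat\fm$ need not be $p^n$-th powers, so your version does not hold. The fix is precisely the device you already used when establishing $\fm$-torsion of $\Phi_R(E)$: if $\fm^k e=0$ and $s_1,\dots,s_t$ generate $\fm^k$, pick $M$ with $\fm^M\subseteq (s_1^{p^n},\dots,s_t^{p^n})$, and then choose $r\in R$ with $\hat r - r\in\widehat\fm^M = (s_1^{p^n},\dots,s_t^{p^n})\widehat R$; writing $\hat r - r = \sum_j\hat c_j s_j^{p^n}$ one gets $(\hat r-r)x^n\otimes e = \sum_j\hat c_j x^n\otimes s_j e = 0$. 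You should also be a touch more careful at two adjacent points: when verifying that the extended action satisfies $x\hat r = \hat r^{\,p}x$ you need the approximating $r$ to be close to $\hat r$ modulo a power of $\widehat\fm$ large enough to kill \emph{both} $\xi$ and $x\xi$; and for injectivity of $\beta$ the phrase ``$\widehat R\otimes_R E\cong E$'' elides the Frobenius twist --- it is cleaner to observe that $Rx^n\otimes_R (R/\fm^k)\cong (R/(\fm^k)^{[p^n]})x^n$, that $R/(\fm^k)^{[p^n]}$ is Artinian and hence agrees with $\widehat R/(\fm^k)^{[p^n]}\widehat R$, and then pass to the colimit over $k$. None of this changes your approach, but the surjectivity identity as written is false and needs the correction above.
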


\begin{lem}
\label{fsi.3} Suppose that $(R,\fm)$ is $F$-pure, and let $\fa$ be
an ideal of $R$. Then $\fa \widehat{R}$ is a fully
$\Phi_{\widehat{R}}(E)$-special ideal of $\widehat{R}$ if and only
if $\fa$ is a fully $\Phi_R(E)$-special ideal of $R$.
\end{lem}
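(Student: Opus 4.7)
The plan is to reduce everything, on both sides, to the criterion of Lemma \ref{fsi.1}: $\fa$ is fully $\Phi_R(E)$-special iff $(0:_E\fa) \subseteq (\ann_{\Phi_R(E)}(\fa R[x,f]))_0$, and analogously for $\fa\widehat{R}$ over $\widehat{R}$. Then I will match the two containments via the isomorphism $\beta$ of Lemma \ref{lm.3}, noting that $\widehat{R}$ is $F$-pure (as observed just before that lemma) so that the target definition is available.

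\textbf{Matching the left-hand sides.} First I would verify that $(0{:}_E\fa) = (0{:}_E\fa\widehat{R})$ as subsets of $E$. One inclusion is automatic from $\fa \subseteq \fa\widehat{R}$; for the other, one observes that any $e \in E$ is killed by a power of $\fm$, so the action of $\fa\widehat{R}$ on $e$ factors through $R/\fm^n = \widehat{R}/\fm^n\widehat{R}$, and hence coincides with the action of $\fa$. Equivalently, $\fa\widehat{R}\cdot e = \widehat{R}\cdot(\fa e) = 0$ whenever $\fa e = 0$.

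\textbf{Matching the right-hand sides.} By Lemma \ref{lm.3}, the left $R[x,f]$-module structure on $\Phi_R(E)$ extends uniquely to a left $\widehat{R}[x,f]$-structure, and the map $\beta$ is a homogeneous $\widehat{R}[x,f]$-isomorphism $\Phi_R(E) \stackrel{\cong}{\lra} \Phi_{\widehat{R}}(E)$ that restricts to the identity on the $0$th component $E$. I would then argue
\[
\ann_{\Phi_R(E)}(\fa R[x,f]) \;=\; \ann_{\Phi_R(E)}(\fa\widehat{R}[x,f]),
\]
where the right-hand side is computed via the extended $\widehat{R}[x,f]$-action. Indeed, $y \in \Phi_R(E)$ is annihilated by $\fa R[x,f]$ iff $x^iy$ is annihilated by $\fa$ for every $i \in \nn$; but each $x^iy$ is $\fm$-torsion, and for $\fm$-torsion elements (equivalently, for elements of any $\widehat{R}$-module) annihilation by $\fa$ is the same as annihilation by $\fa\widehat{R}$, by the same reasoning as in the preceding paragraph. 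Hence the latter condition is equivalent to $\fa\widehat{R}[x,f]\cdot y = 0$. Applying $\beta$, this special annihilator submodule is carried isomorphically onto $\ann_{\Phi_{\widehat{R}}(E)}(\fa\widehat{R}[x,f])$, and the isomorphism is the identity in degree $0$.

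\textbf{Conclusion.} Combining the two preceding identifications, the containment
\[
(0{:}_E\fa) \;\subseteq\; (\ann_{\Phi_R(E)}(\fa R[x,f]))_0
\]
holds if and only if
\[
(0{:}_E\fa\widehat{R}) \;\subseteq\; (\ann_{\Phi_{\widehat{R}}(E)}(\fa\widehat{R}[x,f]))_0,
\]
which by Lemma \ref{fsi.1} (applied over $R$ and over $\widehat{R}$) is exactly the equivalence to be proved. The main point requiring care is the compatibility of $\fa$- and $\fa\widehat{R}$-annihilation on the $\fm$-torsion module $\Phi_R(E)$, together with keeping straight which ring's graded ideal (in $R[x,f]$ versus $\widehat{R}[x,f]$) is being used to compute the annihilator at each step; once $\beta$ from Lemma \ref{lm.3} is invoked, no further serious obstacle remains.
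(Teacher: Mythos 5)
Your proof is correct and follows essentially the same route as the paper's: both reduce to the containment criterion of Lemma \ref{fsi.1}, invoke the isomorphism $\beta$ of Lemma \ref{lm.3}, and rest on the two identifications $(0{:}_E\fa)=(0{:}_E\fa\widehat{R})$ and $\ann_{\Phi_R(E)}(\fa R[x,f])=\ann_{\Phi_R(E)}((\fa\widehat{R})\widehat{R}[x,f])$. You merely spell out the $\fm$-torsion justification for these equalities that the paper leaves implicit.
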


\begin{proof} By Lemma \ref{lm.3}, when we extend the left $R[x,f]$-module structure on
$\Phi_R(E)$, in the unique way possible, to a left ${\widehat
R}[x,f]$-module structure, $E \cong
E_{\widehat{R}}(\widehat{R}/\widehat{\fm})$ as $\widehat R$-modules
and $\Phi_R(E) \cong \Phi_{\widehat{R}}(E)$ as left ${\widehat
R}[x,f]$-modules. The claim therefore follows from the facts that
$$\ann_{\Phi_R(E)}(\fa R[x,f]) = \ann_{\Phi_R(E)}((\fa \widehat
R)\widehat R[x,f])$$ and $(0:_E \fa) =(0:_E \fa\widehat R)$.
\end{proof}

\section{\it The case where $R$ is an $F$-pure homomorphic image of
an excellent regular local ring of characteristic $p$}

The main aim of this section is to prove that, when $R$ is an
$F$-pure homomorphic image of an excellent regular local ring of
characteristic $p$, every $\Phi(E)$-special ideal of $R$ is fully
$\Phi(E)$-special ideal. This will enable us to extend some results
obtained in \cite[\S 3]{hbf} about an $F$-pure complete local ring
to an $F$-pure homomorphic image of an excellent regular local ring
of characteristic $p$.  We begin the section with a lemma that is
derived from a result of G. Lyubeznik \cite[Lemma 4.1]{Lyube97}.

\begin{lem}
\label{pa.1} Let $(S,\fM)$ be a complete regular local ring of
characteristic $p$, and let $\fB$ be a proper, non-zero ideal of
$S$. Denote $E_S(S/\fM)$ by $E_S$, and let $S[x,f]$ denote the
Frobenius skew polynomial ring over $S$. Let $n \in \N$.

Since $S$ is regular, $S^{(n)}$ is faithfully flat over $S$, and we
identify $Sx^n\otimes_S(0:_{E_S}\fB)$ as an $S$-submodule of
$Sx^n\otimes_SE_S$ in the natural way. Let $a_1, \ldots, a_d$ be a
regular system of parameters for $S$. Consider the $S$-isomorphism
$\delta_n: Sx^n \otimes_S E_S \stackrel{\cong}{\lra} E_S$ of {\rm
\cite[4.2(iii)]{ga}}, for which (with the notation used in the
statement of that result)
$$
\delta_n \left( bx^n \otimes \left[\frac{s}{(a_1\ldots
a_d)^j}\right] \right) = \left[\frac{bs^{p^n}}{(a_1\ldots
a_d)^{jp^n}}\right] \quad \mbox{~for all~} b,s \in S \mbox{~and~} j
\in \nn.
$$
The isomorphism $\delta_n$ maps
\begin{enumerate}
\item $Sx^n\otimes_S(0:_{E_S}\fB)$
onto $(0:_{E_S}\fB^{[p^n]})$, and
\item  $\fB(Sx^n\otimes_S(0:_{E_S}\fB))$
onto $(0:_{E_S}(\fB^{[p^n]}:\fB))$.
\end{enumerate}
\end{lem}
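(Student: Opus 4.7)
The plan is to exploit the fact that $\delta_n$ is an isomorphism of left $S$-modules and to combine this with Matlis duality over the complete local ring $S$. For part (i), one inclusion is essentially formal, coming from the bimodule structure of $S[x,f]$, while the reverse inclusion can be read off from Lyubeznik's \cite[Lemma 4.1]{Lyube97}. Part (ii) will then follow from (i) by $S$-linearity of $\delta_n$, together with the Matlis-duality identity $\fB(0:_{E_S}\fB^{[p^n]}) = (0:_{E_S}(\fB^{[p^n]}:\fB))$ in $E_S$.

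For (i), first I verify the inclusion. Given $c \in \fB$, $e \in (0:_{E_S}\fB)$ and $b \in S$, the commutation $x^nc = c^{p^n}x^n$ in $S[x,f]$ yields
\[
c^{p^n}(bx^n \otimes e) = bc^{p^n}x^n \otimes e = bx^nc \otimes e = bx^n \otimes ce = 0
\]
in $Sx^n \otimes_S E_S$. Since $\delta_n$ is $S$-linear, $c^{p^n}\delta_n(bx^n \otimes e) = 0$ for every $c \in \fB$, so $\delta_n(Sx^n \otimes_S (0:_{E_S}\fB)) \subseteq (0:_{E_S}\fB^{[p^n]})$. For the reverse inclusion I would appeal to Lyubeznik's \cite[Lemma 4.1]{Lyube97}, which supplies the natural $S$-isomorphism $Sx^n \otimes_S (0:_{E_S}\fB) \cong (0:_{E_S}\fB^{[p^n]})$; combined with the inclusion just proved, the map on Matlis duals becomes a surjective $S$-endomorphism of the Noetherian module $S/\fB^{[p^n]}$, and hence an isomorphism, forcing equality of the two submodules of $E_S$.

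For (ii), the $S$-linearity of $\delta_n$ together with (i) gives
\[
\delta_n\bigl(\fB(Sx^n \otimes_S (0:_{E_S}\fB))\bigr) = \fB\cdot\delta_n\bigl(Sx^n \otimes_S (0:_{E_S}\fB)\bigr) = \fB(0:_{E_S}\fB^{[p^n]}),
\]
so the lemma reduces to the identity $\fB(0:_{E_S}\fB^{[p^n]}) = (0:_{E_S}(\fB^{[p^n]}:\fB))$ in $E_S$. The inclusion $\subseteq$ is elementary: for $t \in \fB$, $e \in (0:_{E_S}\fB^{[p^n]})$ and $s \in (\fB^{[p^n]}:\fB)$ one has $st \in \fB^{[p^n]}$, so $s(te) = (st)e = 0$. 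For the reverse inclusion I would apply the standard Matlis-duality identification $\Hom_S(A/\fa A, E_S) \cong (0:_{\Hom_S(A, E_S)}\fa)$ with $A = (0:_{E_S}\fB^{[p^n]})$ (whose Matlis dual is $S/\fB^{[p^n]}$) and $\fa = \fB$, giving that the Matlis dual of $(0:_{E_S}\fB^{[p^n]})/\fB(0:_{E_S}\fB^{[p^n]})$ is $(\fB^{[p^n]}:\fB)/\fB^{[p^n]}$. Dualizing the short exact sequence
\[
0 \to \fB(0:_{E_S}\fB^{[p^n]}) \to (0:_{E_S}\fB^{[p^n]}) \to (0:_{E_S}\fB^{[p^n]})/\fB(0:_{E_S}\fB^{[p^n]}) \to 0
\]
then identifies the Matlis dual of $\fB(0:_{E_S}\fB^{[p^n]})$ with $S/(\fB^{[p^n]}:\fB)$, which agrees with the Matlis dual of $(0:_{E_S}(\fB^{[p^n]}:\fB))$. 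The inclusion $\subseteq$ already established dualizes to a surjective endomorphism of $S/(\fB^{[p^n]}:\fB)$, which must be an isomorphism, so the two submodules of $E_S$ coincide.

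The main obstacle is the Matlis-duality bookkeeping in (ii): computing the Matlis dual of the submodule $\fB(0:_{E_S}\fB^{[p^n]})$ cleanly without the luxury of assuming $\fB$ is $\fM$-primary (where a naive length count would close the gap). Over the complete local ring $S$ the functor $\Hom_S(-, E_S)$ is exact and implements the anti-equivalence between Artinian and Noetherian $S$-modules, and this is precisely what allows one to conclude equality of submodules of $E_S$ from an equality of Matlis duals.
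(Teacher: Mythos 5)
Your proof is correct and rests on the same ingredients as the paper's (Lyubeznik's Lemma 4.1 together with Matlis biduality over the complete local ring $S$). The paper's version is shorter because it directly computes the $S$-annihilators $(0:_S N) = \fB^{[p^n]}$ (for $N := Sx^n\otimes_S(0:_{E_S}\fB)$) and $(0:_S\fB N) = ((0:_S N):\fB) = (\fB^{[p^n]}:\fB)$, and then applies the biduality $T = (0:_{E_S}(0:_S T))$ for $S$-submodules $T$ of $E_S$ in one stroke, whereas you prove one inclusion elementarily and then close each gap via the ``a surjective endomorphism of a Noetherian module is an isomorphism'' device; both arguments are sound, and your reduction in (ii) to the identity $\fB(0:_{E_S}\fB^{[p^n]}) = (0:_{E_S}(\fB^{[p^n]}:\fB))$ is exactly the content of the paper's annihilator computation in a slightly more roundabout dress.
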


\begin{proof} (i) Use of the analogue of Lyubeznik \cite[Lemma
4.1]{Lyube97} for the functor $Sx^n\otimes_S\: {\scriptscriptstyle
\bullet} \:$ shows that the Matlis dual of
$Sx^n\otimes_S(0:_{E_S}\fB)$ is $S$-isomorphic to $Sx^n \otimes_S
(S/\fB) \cong S/\fB^{[p^n]}$. Since each $S$-module has the same
annihilator as its Matlis dual, we thus see that
$Sx^n\otimes_S(0:_{E_S}\fB)$ has annihilator $\fB^{[p^n]}$. Since
$S$ is complete, we have $T = (0:_{E_S}(0:_ST))$ for each submodule
$T$ of $E_S$, by Matlis duality (see, for example, \cite[p.\
154]{SV}). It therefore follows that
$$
\delta_n(Sx^n\otimes_S(0:_{E_S}\fB)) =(0:_{E_S}\fB^{[p^n]}).
$$
(ii) Set $N := Sx^n\otimes_S(0:_{E_S}\fB)$. Similar reasoning shows
that
$$
\delta_n(\fB N) = (0:_{E_S}(0:_S\fB N)) = (0:_{E_S}((0:_SN) : \fB))
= (0:_{E_S}(\fB^{[p^n]}:\fB)).
$$
\end{proof}

\begin{prop}
\label{pa.2} Suppose that $R = S/\fA$, where $(S,\fM)$ is a regular
local ring of characteristic $p$, and $\fA$ is a proper ideal of
$S$. Assume also that $R$ is $F$-pure. Let $\fb$ be a proper ideal
of $R$; let $\fB$ be the unique ideal of $S$ that contains $\fA$ and
is such that $\fB/\fA = \fb$.

Then $\fb$ is fully $\Phi(E)$-special if and only if
$(\fA^{[p^n]}:\fA) \subseteq (\fB^{[p^n]}:\fB)$ for all $n \in \N$.
\end{prop}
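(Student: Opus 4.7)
The plan is to reduce to the case where $S$ is complete, to describe each graded piece $\Phi_R(E)_m$ explicitly in terms of submodules of $E_S := E_S(S/\fM)$ via the $S$-isomorphisms $\delta_m$ of \cite[4.2(iii)]{ga}, and then to invoke Matlis duality to convert a containment of submodules of $E_S$ into the desired containment of Frobenius colon ideals. The reduction itself is straightforward: by Lemma \ref{fsi.3}, $\fb$ is fully $\Phi_R(E)$-special if and only if $\fb\widehat{R} = \fB\widehat{S}/\fA\widehat{S}$ is fully $\Phi_{\widehat{R}}(E)$-special, and faithful flatness of $S \to \widehat{S}$ gives $(I:J)\widehat{S} = (I\widehat{S}:J\widehat{S})$ together with $I \subseteq J \Leftrightarrow I\widehat{S} \subseteq J\widehat{S}$ for ideals $I,J$ of $S$. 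So I may assume $S$ is complete.

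For the key identification, since $E = E_R(R/\fm)$ coincides as an $S$-module with $(0:_{E_S}\fA)$, one has $\fA E = 0$ and $E \otimes_S R = E$; a direct check (using right-exactness of tensor product and the fact that the right $S$-action on $Rx^m$ factors through $R$) identifies $\Phi_R(E)_m = Rx^m \otimes_R E$ with $\Phi_S(E)_m/\fA\Phi_S(E)_m$. Applying Lemma \ref{pa.1} with its $\fB$ taken to be $\fA$ (the case $\fA = 0$ is trivial), the $S$-isomorphism $\delta_m$ sends $\Phi_S(E)_m$ onto $(0:_{E_S}\fA^{[p^m]})$ and sends $\fA\Phi_S(E)_m$ onto $(0:_{E_S}(\fA^{[p^m]}:\fA))$, whence it induces an $S$-isomorphism
$$
\overline{\delta_m}:\; \Phi_R(E)_m \;\stackrel{\cong}{\longrightarrow}\; (0:_{E_S}\fA^{[p^m]})\big/(0:_{E_S}(\fA^{[p^m]}:\fA)).
$$

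Now I translate the condition through $\overline{\delta_m}$. For $\xi \in E$ set $\xi^{(m)} := \delta_m(x^m \otimes \xi)$; then the image of $Bx^m \cdot \xi = Bx^m \otimes \xi$ in $\Phi_R(E)_m$ corresponds under $\overline{\delta_m}$ to the class of $B\xi^{(m)}$. Since $(0:_E\fb) = (0:_{E_S}\fB)$, full $\Phi(E)$-specialness of $\fb$ is equivalent to $B\xi^{(m)} \in (0:_{E_S}(\fA^{[p^m]}:\fA))$ for every $\xi \in (0:_{E_S}\fB)$, $B \in \fB$, and $m \in \N$. As $\xi$ and $B$ vary, the $S$-span of the elements $B\xi^{(m)}$ is $\fB \cdot (0:_{E_S}\fB^{[p^m]})$, which by Lemma \ref{pa.1}(ii) applied to $\fB$ equals $(0:_{E_S}(\fB^{[p^m]}:\fB))$. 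So the condition becomes $(0:_{E_S}(\fB^{[p^m]}:\fB)) \subseteq (0:_{E_S}(\fA^{[p^m]}:\fA))$ for every $m \in \N$, and Matlis duality in the complete local ring $S$ converts this into the desired $(\fA^{[p^m]}:\fA) \subseteq (\fB^{[p^m]}:\fB)$.

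The main technical obstacle is the identification in the second paragraph: verifying that $\fA\Phi_S(E)_m$ is exactly the kernel of the surjection $\Phi_S(E)_m \twoheadrightarrow \Phi_R(E)_m$. One inclusion is immediate, but the other requires carefully shuttling between $\otimes_S$ and $\otimes_R$ using $\fA E = 0$. Once this identification and the passage to the completion are in hand, Lemma \ref{pa.1} and Matlis duality deliver the equivalence essentially formally.
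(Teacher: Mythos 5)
Your proof is correct and follows essentially the same route as the paper: reduce to the complete case via Lemma \ref{fsi.3} and faithful flatness, identify the kernel of $Sx^m\otimes_S E \twoheadrightarrow Rx^m\otimes_R E$ as $\fA(Sx^m\otimes_S E)$ using right-exactness of tensor product, and then invoke Lemma \ref{pa.1} together with Matlis duality over the complete ring $S$. The only cosmetic difference is that you state the key condition as a vanishing condition in the quotient $\Phi_S(E)_m/\fA\Phi_S(E)_m$, whereas the paper phrases the same thing as the containment $\fB(Sx^m\otimes_S(0:_{E_S}\fB)) \subseteq \fA(Sx^m\otimes_S(0:_{E_S}\fA))$ inside $Sx^m\otimes_S E_S$.
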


\begin{note} In the $F$-finite case, this result is already known
and due to K. Schwede \cite[Proposition 3.11 and Lemma
5.1]{Schwe10}.
\end{note}

\begin{proof} If $\fA = 0$, then $R$ is regular, so that its big test ideal
is $R$ itself (by \cite[Theorem 8.8]{LyuSmi01}, for example) and the
only proper $\Phi(E)$-special ideal of $R$ is $0$; also,
$(0^{[p^n]}:0) = S$, and the only proper ideal $\fB$ of $S$
satisfying $(0^{[p^n]}:0) \subseteq (\fB^{[p^n]}:\fB)$ for all
$n\in\N$ is the zero ideal. Thus the result is true when $\fA = 0$;
we therefore assume for the remainder of this proof that $\fA \neq
0$.

Note that $\widehat{R} = \widehat{S}/\fA\widehat{S}$ is again
$F$-pure and that $\widehat{S}$ is an excellent complete regular
local ring of characteristic $p$, with maximal ideal $\fM
\widehat{S}$.

We also note that $\fb$ is a fully $\Phi_R(E)$-special ideal of $R$
if and only if $\fb \widehat{R}$ is a fully
$\Phi_{\widehat{R}}(E)$-special ideal of $\widehat{R}$, by Lemma
\ref{fsi.3}. Furthermore, by the faithful flatness of $\widehat{S}$
over $S$, we have, for $n \in \N$,
$$
((\fA \widehat{S})^{[p^n]} : \fA \widehat{S}) = (\fA^{[p^n]} :
\fA)\widehat{S} \subseteq (\fB^{[p^n]} : \fB)\widehat{S} = (\fB
\widehat{S})^{[p^n]} : \fB \widehat{S})
$$
if and only if $(\fA^{[p^n]} : \fA) \subseteq (\fB^{[p^n]} : \fB)$.
Therefore, we can, and do, assume henceforth in this proof that $S$
is complete.

Let $E_S := E_S(S/\fM)$. Now $(0:_{E_S}\fA) = E := E_R(R/\fm)$ and
$(0:_{E_S}\fB) = (0:_{E}\fb)$. Note that $\fb$ is fully
$\Phi_R(E)$-special if and only if, for each $n \in \N$ and each $r
\in \fb$, the element $rx^n \in Rx^n$ annihilates the $R$-submodule
$(0:_{E}\fb)$ of the $0$th component $E$ of $\Phi_R(E)$.

Let $n \in \N$. There is an exact sequence of $(S,S)$-bimodules
$$
0 \lra \fA Sx^n\stackrel{\subseteq}{\lra} Sx^n \stackrel{\nu}{\lra}
Rx^n \lra 0,
$$ where $\nu(sx^n) = (s + \fA)x^n$ for all $s \in S$. The map $$Sx^n
\otimes_S(0:_{E_S}\fA) \lra Rx^n \otimes_S(0:_{E_S}\fA) = Rx^n
\otimes_R(0:_{E_S}\fA) = Rx^n \otimes_RE$$ induced by $\nu$
therefore has kernel $\fA(Sx^n \otimes_S(0:_{E_S}\fA))$.

It follows that $\fb$ is fully $\Phi_R(E)$-special if and only if,
for all $n \in \N$, $s\in \fB$ and $g \in (0:_{E_S}\fB) =
(0:_{E}\fb)$, the element $sx^n \otimes g$ of $Sx^n
\otimes_S(0:_{E_S}\fA)$ lies in $$\fA(Sx^n
\otimes_S(0:_{E_S}\fA)).$$ In other words, $\fb$ is fully
$\Phi_R(E)$-special if and only if, for all $n \in \N$, we have
$$\fB(Sx^n \otimes_S(0:_{E_S}\fB)) \subseteq \fA(Sx^n
\otimes_S(0:_{E_S}\fA)).$$ (We are here identifying $Sx^n
\otimes_S(0:_{E_S}\fB)$ and $Sx^n \otimes_S(0:_{E_S}\fA)$ with
submodules of $Sx^n \otimes_SE_S$ in the obvious ways, using the
faithful flatness of $S^{(n)}$ over $S$.)

By \cite[4.2(iii)]{ga}, we have $Sx^n \otimes_SE_S \cong E_S$. Since
$S$ is complete, each submodule $T$ of $E_S$ satisfies $T =
(0:_{E_S}(0:_ST))$. Set $N := Sx^n \otimes_SE_S$. Thus
$$
\fA(Sx^n \otimes_S(0:_{E_S}\fA)) = (0:_{N}(0:_S(\fA(Sx^n
\otimes_S(0:_{E_S}\fA))))) = (0:_{N}(\fA^{[p^n]}:\fA)),
$$
by Lemma \ref{pa.1}. Similarly, $\fB(Sx^n \otimes_S(0:_{E_S}\fB)) =
(0:_{N}(\fB^{[p^n]}:\fB))$. It follows that $\fb$ is fully
$\Phi_R(E)$-special if and only if
$$
(0:_{N}(\fB^{[p^n]}:\fB)) \subseteq (0:_{N}(\fA^{[p^n]}:\fA)) \quad
\mbox{for all~} n \in \N,
$$
that is (since $N \cong E_S$), if and only if $(\fA^{[p^n]}:\fA)
\subseteq (\fB^{[p^n]}:\fB)$ for all $n \in \N$.
\end{proof}

\begin{thm}
\label{pa.3} Suppose that $R = S/\fA$ is a homomorphic image of an
excellent regular local ring $(S,\fM)$ of characteristic $p$, modulo
a proper ideal $\fA$. Assume that $R$ is $F$-pure.

Then each $\Phi(E)$-special ideal of $R$ is fully $\Phi(E)$-special.
\end{thm}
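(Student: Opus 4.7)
The plan is to exploit the completion, reducing to the complete case where Proposition \ref{res.2} is available. First, the easy case: if $\fA = 0$, then $R = S$ is regular, its big test ideal is $R$ itself (by \cite[Theorem 8.8]{LyuSmi01}, as noted in the proof of Proposition \ref{pa.2}), so the only proper $\Phi(E)$-special ideal is $(0)$, which is trivially fully $\Phi(E)$-special. Henceforth assume $\fA \neq 0$.

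Let $\fa$ be a $\Phi_R(E)$-special ideal of $R$. By Lemma \ref{fsi.3}, $\fa$ is fully $\Phi_R(E)$-special if and only if $\fa\widehat{R}$ is fully $\Phi_{\widehat{R}}(E)$-special; and since $\widehat{R} = \widehat{S}/\fA\widehat{S}$ is complete and $F$-pure (by \cite[Corollary 6.13]{HocRob74}), Proposition \ref{res.2} applied to $\widehat{R}$ identifies fully $\Phi_{\widehat{R}}(E)$-special with $\Phi_{\widehat{R}}(E)$-special. So the theorem reduces to the following extension claim: for every $\Phi_R(E)$-special ideal $\fa$, the extension $\fa\widehat{R}$ is $\Phi_{\widehat{R}}(E)$-special.

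To prove this claim, I would first reduce to the prime case. Using closure of $\mathcal{I}_R(\Phi_R(E))$ under minimal primes (\cite[Theorem 3.6]{ga}), write $\fa = \fp_1 \cap \cdots \cap \fp_k$ with each $\fp_i$ a $\Phi_R(E)$-special prime. Excellence of $S$, hence of $R$, forces each $\fp_i\widehat{R}$ to be radical, so it equals the intersection of its finitely many minimal primes $\fP_{i,1}, \ldots, \fP_{i,m_i}$; by faithful flatness, $\fa\widehat{R} = \bigcap_{i,j} \fP_{i,j}$. Since $\mathcal{I}_{\widehat{R}}(\Phi_{\widehat{R}}(E))$ is closed under intersections (the intersection of annihilators being the annihilator of the sum of the corresponding submodules), it suffices to show each $\fP_{i,j}$ lies in $\mathcal{I}_{\widehat{R}}(\Phi_{\widehat{R}}(E))$.

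The main obstacle is this last step: showing, for a $\Phi_R(E)$-special prime $\fp$ of $R$, that each minimal prime of $\fp\widehat{R}$ is $\Phi_{\widehat{R}}(E)$-special. My approach would be to write $\fp = (0:_R N)$ for some $R[x,f]$-submodule $N$ of $\Phi_R(E)$ and use Lemma \ref{lm.3} to view $N$ as an $\widehat{R}[x,f]$-submodule of $\Phi_{\widehat{R}}(E)$. Then $\fQ := (0:_{\widehat{R}} N)$ lies in $\mathcal{I}_{\widehat{R}}(\Phi_{\widehat{R}}(E))$ and satisfies $\fQ \cap R = \fp$ and $\fp\widehat{R} \subseteq \fQ$; by closure under prime components applied in $\widehat{R}$, the minimal primes of $\fQ$ are themselves $\Phi_{\widehat{R}}(E)$-special. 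The crux—where the excellence hypothesis on $S$ is indispensable—is to identify each minimal prime of $\fp\widehat{R}$ with one of these minimal primes of $\fQ$, using the geometric regularity (hence reducedness) of the generic fibre of $R \to \widehat{R}$ at $\fp$ to rule out the existence of a prime of $\widehat{R}$ minimal over $\fp\widehat{R}$ that fails to appear in the decomposition of $\fQ$. Completing this identification is the technical heart of the proof.
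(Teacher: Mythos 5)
Your reduction via Lemma \ref{fsi.3} and Proposition \ref{res.2} is the right first move, and it correctly converts the theorem into the claim that $\fa\widehat{R}$ is $\Phi_{\widehat{R}}(E)$-special whenever $\fa$ is $\Phi_R(E)$-special. The further reduction to the prime case using \cite[Theorem 3.6]{ga} and closure under intersections is also fine. But the step you flag as ``the technical heart'' is a genuine gap, and the geometric-regularity idea you sketch does not fill it. Here is the difficulty. From $\fp = (0:_RN)$ and Lemma \ref{lm.3} you get $\fQ := (0:_{\widehat{R}}N) \in \mathcal{I}_{\widehat{R}}(\Phi_{\widehat{R}}(E))$ with $\fQ\cap R = \fp$ and $\fp\widehat{R}\subseteq\fQ$. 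Geometric regularity of the formal fibre only gives that $\fp\widehat{R}$ is radical; it says nothing about which primes over $\fp$ the ideal $\fQ$ contains. Indeed it is entirely possible a priori that $\fp\widehat{R}$ has several minimal primes $\fP_1,\ldots,\fP_m$ (all lying over $\fp$) and $\fQ=\fP_1$, say; then $\fP_2$ is minimal over $\fp\widehat{R}$, does not contain $\fQ$, and your argument gives no reason for $\fP_2$ to be $\Phi_{\widehat{R}}(E)$-special. What you would actually need is $\fQ = \fp\widehat{R}$. Taking $N = \ann_{\Phi_R(E)}(\fp R[x,f])$, the inclusion $\fQ\subseteq\fp\widehat{R}$ is, via Matlis duality over $\widehat{R}$, equivalent to $(0:_E\fp)\subseteq N$, i.e.\ to $\fp$ being fully $\Phi(E)$-special --- precisely the statement under proof. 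So the outlined route is circular at the crux.

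The paper avoids this by not trying to compare $\fp\widehat{R}$ directly with a $\Phi_{\widehat{R}}(E)$-special ideal. Instead it uses Proposition \ref{pa.2}, which translates ``fully $\Phi(E)$-special'' for $\fb = \fB/\fA$ into the condition $(\fA^{[p^n]}:\fA)\subseteq(\fB^{[p^n]}:\fB)$ for all $n$ inside the regular ring $S$. One then takes the ideal $\fc\in\mathcal{I}_{\widehat{R}}(\Phi_{\widehat{R}}(E))$ with $\fc\cap R = \fb$, invokes Proposition \ref{res.2} to make $\fc$ fully special (hence $(\fA^{[p^n]}:\fA)\widehat{S}\subseteq(\fC^{[p^n]}:\fC)$ where $\fC$ is the preimage of $\fc$ in $\widehat{S}$), and then \emph{contracts} to $S$. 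The nontrivial point in that contraction is $\fC^{[p^n]}\cap S = (\fC\cap S)^{[p^n]}$, which is Lyubeznik--Smith \cite[Lemma 6.6]{LyuSmi01} and rests on Radu's theorem and Popescu's general N\'eron desingularization --- this is exactly where excellence is spent, and it is the ingredient missing from your outline. If you want to salvage your approach, you should look for an argument that yields $\fp\widehat{R}=\fQ$ (equivalently $(0:_E\fp)\subseteq\ann_{\Phi(E)}(\fp R[x,f])$); but it is hard to see how to do so without, in effect, re-deriving the Lyubeznik--Smith contraction result.
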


\begin{proof} Once again, the claim is easy to prove if $\fA = 0$,
and so we assume henceforth in this proof that $\fA \neq 0$

Note that $\widehat{R} = \widehat{S}/\fA\widehat{S}$ is again
$F$-pure and that $\widehat{S}$ is an excellent complete regular
local ring of characteristic $p$, with maximal ideal $\fM
\widehat{S}$.

Let $\fb$ be a $\Phi(E)$-special $R$-ideal with $\fb \neq R$. Then
$\fb = \fc\cap R$ for some $\Phi_{\widehat{R}}(E)$-special
${\widehat R}$-ideal $\fc$. (We have used Lemma \ref{lm.3} here.)
Let $\fC$ be the unique ideal of ${\widehat S}$ that contains $\fA
{\widehat S}$ and is such that $\fC/\fA {\widehat S} = \fc$. By
Proposition \ref{res.2}, the ideal $\fc$ of ${\widehat R}$ is fully
$\Phi_{\widehat{R}}(E)$-special, and so, by Proposition \ref{pa.2},
we have
$$
(\fA^{[p^n]}:\fA){\widehat S} = ((\fA{\widehat
S})^{[p^n]}:\fA{\widehat S}) \subseteq (\fC^{[p^n]}:\fC) \quad
\mbox{for all~} n \in \N.
$$
Set $\fC \cap S := \fB$, so that $\fB/\fA = \fb$.

Let $n \in \N$ and $s \in (\fA^{[p^n]}:_S\fA)$. Therefore $s \in
(\fC^{[p^n]}:\fC)$. It follows from G. Lyubeznik and K. E. Smith
\cite[Lemma 6.6]{LyuSmi01} that $\fC^{[p^n]}\cap S = (\fC\cap
S)^{[p^n]}$. (Lyubeznik's and Smith's proof of this result uses work
of N. Radu \cite[Corollary 5]{Radu92}, which, in turn, uses D.
Popescu's general N\'eron desingularization \cite{Popes85,
Popes86}.) We can now deduce that
$$
s(\fC\cap S) \subseteq s\fC \cap S \subseteq \fC^{[p^n]}\cap S =
(\fC\cap S)^{[p^n]},
$$
so that $s \in ((\fC\cap S)^{[p^n]}:\fC\cap S) = (\fB^{[p^n]}:\fB)$.

We have thus shown that $(\fA^{[p^n]}:\fA) \subseteq
(\fB^{[p^n]}:\fB)$ for all $n\in \N$, so that $\fb = \fB/\fA$ is
fully $\Phi(E)$-special by Proposition \ref{pa.2}.
\end{proof}

In the case where $R$ is an $F$-pure homomorphic image of an
excellent regular local ring of characteristic $p$, the
characterization of ${\mathcal I}(\Phi(E))$ afforded by Proposition
\ref{pa.2} and Theorem \ref{pa.3} enables us to see that that set
behaves well under localization. As the ideals in ${\mathcal
I}(\Phi(E))$ are precisely those that can be expressed as
intersections of finitely many prime members of ${\mathcal
I}(\Phi(E))$, it is of interest to examine the behaviour of
${\mathcal I}(\Phi(E))\cap \Spec (R)$ under localization. The next
proposition, which is an extension of part of \cite[Proposition
2.8]{Fpurhastest}, is in preparation for this investigation.

\begin{prop} \label{pa.4} Let $S$ be a regular local ring of
characteristic $p$, and let $n \in \N$. Let $\fA,\fB_1, \ldots,
\fB_t,\fC$ be ideals of $S$ with $0 \neq \fA \neq S$, and let $\fA =
\fQ_1 \cap \ldots \cap \fQ_t$ be a minimal primary decomposition of
$\fA$.
\begin{enumerate}
\item We have $(\fB_1 \cap \cdots \cap \fB_t)^{[p^n]} = \fB_1 ^{[p^n]}\cap \cdots \cap
\fB_t^{[p^n]}$.
\item If $\fQ$ is a $\fP$-primary ideal of $S$, then $\fQ^{[p^n]}$ is also
$\fP$-primary.
\item The equation $\fA^{[p^n]} = \fQ_1^{[p^n]} \cap \cdots \cap
\fQ_t^{[p^n]}$ provides a minimal primary decomposition of
$\fA^{[p^n]}$.
\item We have $(\fA :\fC)^{[p^n]} = (\fA ^{[p^n]}:\fC^{[p^n]})$ and
$(\fA^{[p^n]} : \fA) \subseteq \left((\fA :\fC)^{[p^n]} : (\fA :
\fC)\right)$.
\item If $\fP$ is an associated prime ideal of $\fA$, then $(\fA^{[p^n]} : \fA)
\subseteq (\fP^{[p^n]} : \fP)$.
\item Since $0 \neq \fA \neq S$, we have $(\fA^{[p^n]} : \fA) \neq S$.
If $\fP_1 := \sqrt{\fQ_1}$ is a minimal prime ideal of $\fA$, then
$\fP_1$ is a minimal prime ideal of $(\fA^{[p^n]} : \fA)$ and the
unique $\fP_1$-primary component of $(\fA^{[p^n]} : \fA)$ is
$(\fQ_1^{[p^n]} : \fQ_1)$.
\end{enumerate}
\end{prop}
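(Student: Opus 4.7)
The plan is to prove the six parts in order, with the central tool being Kunz's theorem: since $S$ is regular of characteristic $p$, the $n$-th Frobenius iterate $F^n \colon S \to S$ is flat, and faithfully flat in the local setting. Equivalently, $S^{(n)}$ is a faithfully flat $S$-module via $F^n$, and there is a canonical identification $S^{(n)} \otimes_S (S/\fB) \cong S/\fB^{[p^n]}$ for any ideal $\fB$. Once one buys this, the first four parts are essentially formal manipulations, while (v) is a one-line consequence of (iv) and (vi) is a localization argument that uses everything preceding it.

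For (i), apply faithful flatness to the injection $\bigcap \fB_i \hookrightarrow S$ and use the standard fact that intersections of finitely many ideals commute with flat base change; the result $(\bigcap \fB_i)^{[p^n]} = \bigcap \fB_i^{[p^n]}$ is then a translation. For (ii), one shows $\sqrt{\fQ^{[p^n]}} = \fP$ by an elementary computation, and then verifies primariness as follows: if $a \notin \fP$, then multiplication by $a$ is injective on $S/\fQ$, so after applying the exact functor $S^{(n)} \otimes_S \;{\scriptscriptstyle \bullet}\;$ one finds that multiplication by $a^{p^n}$ is injective on $S/\fQ^{[p^n]}$; thus $ab \in \fQ^{[p^n]}$ forces $a^{p^n} b \in \fQ^{[p^n]}$ and hence $b \in \fQ^{[p^n]}$. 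For (iii), combining (i) and (ii) immediately gives the primary decomposition $\fA^{[p^n]} = \bigcap \fQ_i^{[p^n]}$ with distinct associated primes $\fP_i$; minimality is a faithful-flatness descent: a redundancy like $\fQ_1^{[p^n]} \supseteq \bigcap_{i \geq 2} \fQ_i^{[p^n]} = (\bigcap_{i \geq 2} \fQ_i)^{[p^n]}$ would descend to $\fQ_1 \supseteq \bigcap_{i \geq 2} \fQ_i$, contradicting the minimality hypothesis on the original decomposition.

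For (iv), the identity $(\fA:\fC)^{[p^n]} = (\fA^{[p^n]}:\fC^{[p^n]})$ is the standard faithfully-flat formula $(\fA : \fC) T = (\fA T : \fC T)$ applied to $T = S$ via Frobenius (with $\fC$ finitely generated by Noetherianity). For the second inclusion, given $s \in (\fA^{[p^n]}:\fA)$ and $t \in (\fA:\fC)$, the computation $st \fC \subseteq s \fA \subseteq \fA^{[p^n]}$ shows $st \in (\fA^{[p^n]}:\fC)$; and since $\fC^{[p^n]} \subseteq \fC$ (as $c^{p^n} \in \fC$ for $c \in \fC$), $(\fA^{[p^n]}:\fC) \subseteq (\fA^{[p^n]}:\fC^{[p^n]}) = (\fA:\fC)^{[p^n]}$. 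Part (v) is then immediate: if $\fP = (\fA : c)$, apply (iv) with $\fC = (c)$.

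The main work is in (vi). First, $(\fA^{[p^n]}:\fA) \neq S$ follows from Nakayama applied to $\fA$: since $\fA \subseteq \fM$ and $n \geq 1$, one has $\fA^{[p^n]} \subseteq \fM\fA \subsetneq \fA$, so $\fA \not\subseteq \fA^{[p^n]}$. The same Nakayama argument, applied to $\fP_1$, shows $(\fP_1^{[p^n]}:\fP_1) \subseteq \fP_1$, and combined with (v) this gives $(\fA^{[p^n]}:\fA) \subseteq \fP_1$. Localizing at $\fP_1$, the primary components $\fQ_j$ for $j \neq 1$ become the unit ideal (because $\fP_1$ is minimal among the $\fP_i$, so $\fQ_j \not\subseteq \fP_1$), yielding $(\fA^{[p^n]}:\fA)_{\fP_1} = (\fQ_1^{[p^n]}:\fQ_1)_{\fP_1}$. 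The key subclaim — and I expect this to be the main obstacle — is that $(\fQ_1^{[p^n]}:\fQ_1)$ is itself $\fP_1$-primary: its radical contains $\sqrt{\fQ_1^{[p^n]}} = \fP_1$ by (ii), and since $(\fQ_1^{[p^n]}:\fQ_1)$ is proper (Nakayama again), any element of $(\fQ_1^{[p^n]}:\fQ_1)$ outside $\fP_1$ would be a non-zero-divisor modulo the $\fP_1$-primary ideal $\fQ_1^{[p^n]}$, forcing $\fQ_1 \subseteq \fQ_1^{[p^n]}$, contradiction. Consequently $(\fQ_1^{[p^n]}:\fQ_1)$ equals the contraction of its localization at $\fP_1$, so it is exactly the $\fP_1$-isolated component of $(\fA^{[p^n]}:\fA)$; and $\fP_1$ must be minimal over $(\fA^{[p^n]}:\fA)$, because any strictly smaller prime containing it would, after localization, contradict the $\fP_1$-primariness of $(\fQ_1^{[p^n]}:\fQ_1)_{\fP_1}$.
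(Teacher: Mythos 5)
Your proof is essentially correct, and the route you take is the natural one. The paper itself gives no self-contained argument here: it simply cites \cite[Proposition 2.8]{Fpurhastest} for (i)--(iii) and says (iv)--(vi) follow by ``obvious modifications'' of the arguments there. Those arguments are, as you guess, built on Kunz's theorem that the Frobenius is (faithfully) flat on a regular ring, together with the identification $S^{(n)}\otimes_S(S/\fB) \cong S/\fB^{[p^n]}$, so your write-up is filling in exactly what the paper leaves implicit.

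One place that needs a sentence more is the ``key subclaim'' in (vi), that $(\fQ_1^{[p^n]}:\fQ_1)$ is $\fP_1$-primary. What you actually verify is that the \emph{radical} of $(\fQ_1^{[p^n]}:\fQ_1)$ equals $\fP_1$: the radical contains $\fP_1$ because $(\fQ_1^{[p^n]}:\fQ_1)\supseteq\fQ_1^{[p^n]}$, and is contained in $\fP_1$ because any $s\in(\fQ_1^{[p^n]}:\fQ_1)\setminus\fP_1$ would force $\fQ_1\subseteq\fQ_1^{[p^n]}$, contradicting Nakayama. But having prime radical does not by itself make an ideal primary (here $\fP_1$ need not be a minimal prime of $S$). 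You still need the defining implication: if $ab\in(\fQ_1^{[p^n]}:\fQ_1)$ and $b\notin\fP_1$, then for every $q\in\fQ_1$ one has $(aq)b\in\fQ_1^{[p^n]}$, so $aq\in\fQ_1^{[p^n]}$ by $\fP_1$-primariness of $\fQ_1^{[p^n]}$, whence $a\in(\fQ_1^{[p^n]}:\fQ_1)$. Equivalently, invoke the standard fact that if $\fQ$ is $\fP$-primary and $\fJ\not\subseteq\fQ$ then $(\fQ:\fJ)$ is $\fP$-primary; here $\fQ_1\not\subseteq\fQ_1^{[p^n]}$ is the Nakayama observation you already have. Once this is inserted, the localization argument identifying $(\fQ_1^{[p^n]}:\fQ_1)$ as the $\fP_1$-isolated component goes through as you describe, and the rest of (vi), as well as parts (i)--(v), are correct as written.
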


\begin{proof} Parts (i), (ii) and (iii) were essentially proved in
\cite[Proposition 2.8]{Fpurhastest}, while parts (iv), (v) and (vi)
can be proved by obvious modifications of the arguments used to
prove the corresponding parts of \cite[Proposition
2.8]{Fpurhastest}.
\end{proof}

\begin{cor}
\label{pa.5} Suppose that $R$ is $F$-pure and a homomorphic image of
an excellent regular local ring $S$ of characteristic $p$ modulo a
proper ideal $\fA$. Let $\fp \in \Spec (R)$. Then
$$
{\mathcal I}_{R_{\fp}}(\Phi_{R_{\fp}}(E_{R_{\fp}}(R_{\fp}/\fp
R_{\fp}))) \cap \Spec (R_{\fp}) = \left\{ \fq R_{\fp} : \fq \in
{\mathcal I}(\Phi(E))\cap \Spec (R) \mbox{~and~} \fq \subseteq
\fp\right\}.
$$
\end{cor}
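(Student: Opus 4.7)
The plan is to reduce, via Proposition \ref{pa.2} combined with Theorem \ref{pa.3}, to an equivalence between a colon-ideal inclusion in $S$ and its localization at the preimage of $\fp$. Let $\fP \in \Spec(S)$ denote that preimage, so that $\fA \subseteq \fP$ and $R_\fp \cong S_\fP/\fA S_\fP$. Since localizations of excellent regular rings are again excellent and regular, $(S_\fP, \fP S_\fP)$ is an excellent regular local ring of characteristic $p$; and since purity of a map of $R$-modules is preserved by flat base change, $R_\fp$ inherits $F$-purity from $R$. The case $\fA = 0$ is immediate (both sides reduce to $\{0\}$), so I assume $\fA \neq 0$. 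Primes of $R_\fp$ correspond bijectively to primes $\fq \in \Spec(R)$ with $\fq \subseteq \fp$, and these in turn correspond to primes $\fQ$ of $S$ with $\fA \subseteq \fQ \subseteq \fP$; accordingly I write $\fq R_\fp = \fQ S_\fP / \fA S_\fP$.

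Next, I would apply Proposition \ref{pa.2} together with Theorem \ref{pa.3} to both $R$ and $R_\fp$ (which satisfy the required hypotheses by the previous paragraph). Under this combination, a prime $\fq$ of $R$ lies in ${\mathcal I}(\Phi(E))$ precisely when
\[
(\fA^{[p^n]} :_S \fA) \subseteq (\fQ^{[p^n]} :_S \fQ) \quad \text{for all } n \in \N,
\]
and analogously $\fq R_\fp$ lies in ${\mathcal I}_{R_\fp}(\Phi_{R_\fp}(E_{R_\fp}(R_\fp/\fp R_\fp)))$ precisely when the corresponding inclusion over $S_\fP$ holds. With this translation in hand, the corollary reduces to showing that the displayed inclusion holds in $S$ if and only if its localization at $\fP$ holds in $S_\fP$.

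Finally, I would verify the equivalence of these two inclusions. The forward direction is just flat localization: $\fA$ and $\fQ$ are finitely generated, so $(\fA^{[p^n]} :_S \fA)S_\fP = ((\fA S_\fP)^{[p^n]} :_{S_\fP} \fA S_\fP)$, and likewise for $\fQ$. For the converse, assuming the $S_\fP$-inclusion, pick $u \in (\fA^{[p^n]} :_S \fA)$; then there exists $t \in S \setminus \fP$ with $tu \fQ \subseteq \fQ^{[p^n]}$. By Proposition \ref{pa.4}(ii), $\fQ^{[p^n]}$ is $\fQ$-primary, and $t \notin \fP \supseteq \fQ$, so the primary property of $\fQ^{[p^n]}$ forces $u\fQ \subseteq \fQ^{[p^n]}$, that is, $u \in (\fQ^{[p^n]} :_S \fQ)$. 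I expect the main obstacle to be the careful verification that $R_\fp$ remains $F$-pure and that $S_\fP$ remains excellent, which is standard but must be invoked; the algebraic core is the short primary-ideal step at the end, which is precisely what allows the localized colon inclusion to descend back to $S$.
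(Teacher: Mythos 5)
Your proposal is correct and follows essentially the same route as the paper: reduce both sides of the asserted equality to the colon-ideal criterion of Proposition \ref{pa.2} (valid by Theorem \ref{pa.3}, since both $R$ and $R_\fp$ are $F$-pure quotients of excellent regular local rings), dispose of the forward direction by flat extension to $S_\fP$, and for the converse recover the inclusion in $S$ from the inclusion in $S_\fP$ by a saturation argument for a $\fQ$-primary ideal.

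Two small points of divergence, neither affecting correctness. First, for the converse step the paper invokes Proposition \ref{pa.4}(vi), which says directly that $(\fQ^{[p^n]}:\fQ)$ is $\fQ$-primary, and then concludes $(\fQ^{[p^n]}:\fQ)^{ec} = (\fQ^{[p^n]}:\fQ)$ in one line via extension--contraction; you instead invoke Proposition \ref{pa.4}(ii) (that $\fQ^{[p^n]}$ is $\fQ$-primary) and chase elements, clearing the denominator $t \notin \fQ$ via the primary property of $\fQ^{[p^n]}$. Both are fine; the paper's route is slightly more compact because the needed statement is already packaged in \ref{pa.4}(vi), whereas your version rederives the saturation step from (ii). Second, for the $F$-purity of $R_\fp$ the paper cites Hochster--Roberts \cite[Lemma 6.2]{HocRob74}; your one-line appeal to flat base change preserving purity is heuristically the right idea, but to make it airtight one must also identify $R^{(1)}\otimes_R R_\fp$ with $R_\fp^{(1)}$ (which holds because $f^{-1}(\fp)=\fp$), so quoting the reference is cleaner. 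These are cosmetic; the substance of your argument matches the paper's.
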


\begin{proof} Note that, by M. Hochster and J. L. Roberts
\cite[Lemma 6.2]{HocRob74}, the localization $R_{\fp}$ is again
$F$-pure. The claim is easy to prove when $\fA = 0$, and so we
assume that $\fA \neq 0$.

For each lower case fraktur letter that denotes an ideal of $R$, let
the corresponding upper case fraktur letter denote the unique ideal
of $S$ that contains $\fA$ and has quotient modulo $\fA$ equal to
the specified ideal of $R$. For example, $\fP$ denotes the unique
ideal of $S$ that contains $\fA$ and is such that $\fP/\fA = \fp$.

Note that $R_{\fp} \cong S_{\fP}/\fA S_{\fP}$ is again a homomorphic
image of an excellent regular local ring $S$ of characteristic $p$.
Let $\fq \in \Spec (R)$ with $\fq \subseteq \fp$.

Suppose first that $\fq \in {\mathcal I}(\Phi(E))\cap \Spec (R)$. By
Theorem \ref{pa.3}, we see that $\fq$ is fully $\Phi(E)$-special;
use of Proposition \ref{pa.2} shows that $(\fA^{[p^n]}:\fA)
\subseteq (\fQ^{[p^n]}:\fQ)$ for all $n \in \N$. Therefore
$$
((\fA S_{\fP})^{[p^n]}:\fA S_{\fP}) \subseteq ((\fQ
S_{\fP})^{[p^n]}:\fQ S_{\fP}) \quad \mbox{for all~} n \in \N.
$$
Since the standard isomorphism $S_{\fP}/\fA S_{\fP}
\stackrel{\cong}{\lra} R_{\fp}$ maps $\fQ S_{\fP}/\fA S_{\fP}$ onto
$\fq R_{\fp}$, it follows from Proposition \ref{pa.2} that $\fq
R_{\fp}$ is fully $\Phi_{R_{\fp}}(E_{R_{\fp}}(R_{\fp}/\fp
R_{\fp}))$-special.

Conversely, suppose that $\fq R_{\fp}$ is
$\Phi_{R_{\fp}}(E_{R_{\fp}}(R_{\fp}/\fp R_{\fp}))$-special, so that,
by Theorem \ref{pa.3}, it is fully
$\Phi_{R_{\fp}}(E_{R_{\fp}}(R_{\fp}/\fp R_{\fp}))$-special. By
Proposition \ref{pa.2}, this means that
$$
((\fA S_{\fP})^{[p^n]}:\fA S_{\fP}) \subseteq ((\fQ
S_{\fP})^{[p^n]}:\fQ S_{\fP}) \quad \mbox{for all~} n \in \N.
$$
Let $^e$ and $^c$ denote extension and contraction of ideals under
the natural ring homomorphism $S \lra S_{\fP}$. Contract the last
displayed inclusion relations back to $S$ to see that
$$
(\fA ^{[p^n]}:\fA ) \subseteq (\fA ^{[p^n]}:\fA )^{ec} \subseteq
(\fQ ^{[p^n]}:\fQ )^{ec} = (\fQ ^{[p^n]}:\fQ ) \quad \mbox{for all~}
n \in \N
$$
because $(\fQ ^{[p^n]}:\fQ)$ is $\fQ$-primary (for all $n\in\N$), by
Proposition \ref{pa.4}(vi). It follows from Proposition \ref{pa.2}
that $\fQ/\fA = \fq$ is fully $\Phi(E)$-special.
\end{proof}

We can now recover a special case of a result of Lyubeznik and
Smith.

\begin{cor} [G. Lyubeznik and K. E. Smith {\cite[Theorem 7.1]{LyuSmi01}}]
\label{pa.6} Suppose that $R$ is $F$-pure and a homomorphic image of
an excellent regular local ring $S$ of characteristic $p$ modulo a
proper ideal $\fA$. Let $\fp \in \Spec (R)$. Then the big test ideal
of $R_{\fp}$ is the extension to $R_{\fp}$ of the big test ideal of
$R$. In symbols, $\widetilde{\tau}(R_{\fp}) =
\widetilde{\tau}(R)R_{\fp}$.
\end{cor}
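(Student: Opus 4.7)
The plan is to apply the characterization (Lemma~\ref{hbf1.6}) of $\widetilde{\tau}(R)$ as the smallest element of $\mathcal{I}(\Phi(E))$ that meets $R^{\circ}$, and the same characterization to $R_{\fp}$ (which is $F$-pure by Hochster--Roberts, as recalled at the start of the proof of Corollary~\ref{pa.5}). Combining this with Corollary~\ref{pa.5}, which matches prime $\Phi$-special ideals of $R_{\fp}$ with those of $R$ contained in $\fp$, and with the closure of both sets of $\Phi$-special ideals under taking prime components, both inclusions will follow from minimal-prime-decomposition arguments. The case $\fA = 0$ is disposed of immediately, as then $R$ and $R_{\fp}$ are regular and both big test ideals equal the whole ring; henceforth write $\tau := \widetilde{\tau}(R)$ and $\sigma := \widetilde{\tau}(R_{\fp})$, and assume $\fA \neq 0$.

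For $\sigma \subseteq \tau R_{\fp}$: decompose $\tau = \fp_1 \cap \cdots \cap \fp_k$ into its minimal primes. Each $\fp_i$ lies in $\mathcal{I}(\Phi(E))$ by the closure under prime components, and any element $r \in \tau \cap R^{\circ}$ certifies $\fp_i \cap R^{\circ} \neq \emptyset$ for every $i$. Localization gives $\tau R_{\fp} = \bigcap_{\fp_i \subseteq \fp} \fp_i R_{\fp}$; for each such $i$, Corollary~\ref{pa.5} places $\fp_i R_{\fp}$ in $\mathcal{I}_{R_{\fp}}(\Phi_{R_{\fp}}(E_{R_{\fp}}(R_{\fp}/\fp R_{\fp}))) \cap \Spec(R_{\fp})$. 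Moreover $\fp_i R_{\fp}$ meets $R_{\fp}^{\circ}$, for if it were contained in $\fq' R_{\fp}$ for some minimal prime $\fq' \subseteq \fp$ of $R$, contracting back to $R$ would yield $\fp_i \subseteq \fq'$, contradicting $\fp_i \cap R^{\circ} \neq \emptyset$. The minimality of $\sigma$ then forces $\sigma \subseteq \fp_i R_{\fp}$ for each relevant $i$, hence $\sigma \subseteq \tau R_{\fp}$.

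For $\tau R_{\fp} \subseteq \sigma$: decompose $\sigma = \fQ_1 \cap \cdots \cap \fQ_{\ell}$ into minimal primes, each $\fQ_j$ in $\mathcal{I}_{R_{\fp}}(\Phi_{R_{\fp}}(E_{R_{\fp}}(R_{\fp}/\fp R_{\fp})))$ and each meeting $R_{\fp}^{\circ}$ (again, a common element of $\sigma$ and $R_{\fp}^{\circ}$ witnesses this). By Corollary~\ref{pa.5}, $\fQ_j = \fq_j R_{\fp}$ for some $\fq_j \in \mathcal{I}(\Phi(E)) \cap \Spec(R)$ with $\fq_j \subseteq \fp$. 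Now $\fq_j$ cannot itself be a minimal prime of $R$, for then $\fq_j R_{\fp}$ would be a minimal prime of $R_{\fp}$, contradicting that $\fQ_j$ meets $R_{\fp}^{\circ}$; and since distinct minimal primes of $R$ are incomparable, a non-minimal prime is contained in no minimal prime, so $\fq_j \cap R^{\circ} \neq \emptyset$. The minimality of $\tau$ then gives $\tau \subseteq \fq_j$, hence $\tau R_{\fp} \subseteq \fQ_j$ for each $j$, and $\tau R_{\fp} \subseteq \sigma$.

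The one place that takes care is the correct passage of the ``meets the complement of minimal primes'' condition across localization: the minimal primes of $R_{\fp}$ correspond bijectively to the minimal primes of $R$ contained in $\fp$, and the incomparability of distinct minimal primes of $R$ is what lets us conclude, in the second direction, that a prime $\fq_j$ strictly containing a minimal prime cannot be contained in any minimal prime. Beyond this bookkeeping, everything is a formal consequence of Lemma~\ref{hbf1.6} together with Corollary~\ref{pa.5}.
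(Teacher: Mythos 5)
Your argument is correct and follows essentially the same route as the paper: the paper's proof simply records that $\widetilde{\tau}(R)$ is the intersection of the positive-height members of $\mathcal{I}(\Phi(E))\cap\Spec(R)$ (the same fact you extract from Lemma~\ref{hbf1.6}, since a prime meets $R^{\circ}$ precisely when it has positive height), does the same for $R_{\fp}$, and invokes Corollary~\ref{pa.5}. You have merely unpacked that one-line deduction into the two inclusions and spelled out the bookkeeping about how ``meets $R^{\circ}$'' behaves under localization, which the paper leaves to the reader.
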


\begin{proof} The big test ideal $\widetilde{\tau}(R)$ of $R$ is
equal to the intersection of the (finitely many) members of
${\mathcal I}(\Phi(E))\cap \Spec (R)$ of positive height, and a
similar statement holds for $R_{\fp}$. The claim therefore follows
from Corollary \ref{pa.5}.
\end{proof}

Some results were obtained in \cite[Theorem 3.1]{hbf} for an
$F$-pure complete local ring of characteristic $p$. We can now use
Theorem \ref{pa.3} to establish analogous results for an $F$-pure
homomorphic image of an excellent regular local ring of
characteristic $p$.

\begin{thm}
\label{fp.11} Suppose $(R,\fm)$ is $F$-pure and that every
$\Phi(E)$-special ideal of $R$ is fully $\Phi(E)$-special. (For
example, by Theorem {\rm \ref{pa.3}}, this would be the case if $R$
were a homomorphic image of an excellent regular local ring of
characteristic $p$.) Let $\fc$ be a proper ideal of $R$ that is
$\Phi(E)$-special. In the light of Theorem\/ {\rm \ref{hbf2.12}},
let $\fp_1, \ldots,\fp_w$ be prime ideals of $R$ for which the
multiplicatively closed subset $S = R \setminus
\bigcup_{i=1}^w\fp_i$ of $R$ satisfies $\fc = \tau^{S}(R)$. Set $ J
:= \Delta^{S}(\Phi(E)),$ a graded left $R[x,f]$-module.
\begin{enumerate}
\item We have
$ J = 0^{*,S}_{E} \oplus 0^{*,S}_{Rx\otimes_RE} \oplus \cdots \oplus
0^{*,S}_{Rx^n\otimes_RE} \oplus \cdots = \ann_{\Phi(E)}(\fc R[x,f]).
$
\item When we regard $J$ as a graded left
$(R/\fc)[x,f]$-module in the natural way, it is $x$-torsion-free and
has ${\mathcal I}_{R/\fc}(J) = \left\{\fg/\fc : \fg \in {\mathcal
I}(\Phi(E)) : \fg \supseteq \fc \right\}.$
\item The $0$th component $J_0$ of $J$ is $(0:_{E}\fc)$; as
$R/\fc$-module, this is isomorphic to
$E_{R/\fc}((R/\fc)/(\fm/\fc))$.
\item The ring $R/\fc$ is $F$-pure.
\item We have ${\mathcal I}(\Phi_{R/\fc}(J_0)) \subseteq {\mathcal
I}_{R/\fc}(J)$, so that
$$
\left\{ \fd : \fd \mbox{~is an ideal of $R$ with~} \fd \supseteq \fc
\mbox{~and~} \fd/\fc \in {\mathcal I}(\Phi_{R/\fc}(J_0))\right\}
\subseteq {\mathcal I}(\Phi_R(E)).
$$
\end{enumerate}
\end{thm}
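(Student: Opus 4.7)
I treat parts (i)--(iii) as essentially formal consequences of machinery already in place. For (i), I will apply Lemma \ref{hbf1.6} to identify $\fc=\tau^S(R)=\fb^{S,\Phi(E)}$, then invoke Reminders \ref{hbf1.5}(iii) and \ref{hbfsect1}(ii). For (ii), since $R$ is $F$-pure, $\Phi(E)$ is $x$-torsion-free, and so is its $R[x,f]$-submodule $J$; the Galois correspondence applied to $J=\ann_{\Phi(E)}(\fc R[x,f])$ gives $(0:_R J)=\fc$, so that $J$ inherits a natural $x$-torsion-free $(R/\fc)[x,f]$-module structure. The identification of $\mathcal I_{R/\fc}(J)$ then reduces to noting that $(R/\fc)[x,f]$-submodules of $J$ are just $R[x,f]$-submodules of $\Phi(E)$ contained in $J$, and that for $\fg\in\mathcal I(\Phi(E))$ with $\fg\supseteq\fc$ the special annihilator $\ann_{\Phi(E)}(\fg R[x,f])$ already sits inside $J$ and has $R$-annihilator $\fg$. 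For (iii), I will combine the fully $\Phi(E)$-special hypothesis on $\fc$ (giving $J_0=(0:_E\fc)$) with the standard Matlis identification $(0:_E\fc)\cong E_{R/\fc}((R/\fc)/(\fm/\fc))$.

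For (iv), my plan is to reduce to the complete case. By Lemma \ref{fsi.3}, the ideal $\fc\widehat R$ is fully $\Phi_{\widehat R}(E)$-special (in particular $\Phi_{\widehat R}(E)$-special); by Hochster and Roberts \cite[Corollary 6.13]{HocRob74}, $\widehat R$ is $F$-pure. Hence \cite[Theorem 3.1]{hbf} applied to $\widehat R$ yields that $\widehat R/\fc\widehat R=\widehat{R/\fc}$ is $F$-pure, from which $R/\fc$ is $F$-pure by faithful flatness of completion (applied to the kernel of the Frobenius-purity map for any $R/\fc$-module).

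The essential tool for (v) will be, for each $n\in\N$, a natural $R/\fc$-linear map
$\theta_n:(R/\fc)^{(n)}\otimes_{R/\fc}E_{R/\fc}\longrightarrow R^{(n)}\otimes_R E$,
$(a+\fc)\otimes e\mapsto a\otimes e$. The well-definedness of $\theta_n$ is exactly the content of the fully $\Phi(E)$-special property of $\fc$: it forces $cx^n\otimes e=0$ in $Rx^n\otimes_R E$ for $c\in\fc$ and $e\in(0:_E\fc)$. Now let $\fd/\fc\in\mathcal I(\Phi_{R/\fc}(J_0))$; by the Galois correspondence for $\Phi_{R/\fc}(J_0)$, I may take it without loss of generality to be the $R/\fc$-annihilator of $K:=\ann_{\Phi_{R/\fc}(J_0)}((\fd/\fc)(R/\fc)[x,f])$, so that $K_0=(0:_{E_{R/\fc}}\fd/\fc)=(0:_E\fd)$. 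The annihilation $(t+\fc)x^n\otimes e=0$ in $\Phi_{R/\fc}(J_0)_n$, valid for all $t\in\fd$, $e\in K_0$ and $n\geq 0$, then pushes forward under $\theta_n$ to $tx^n\otimes e=0$ in $Rx^n\otimes_R E$; this is precisely the fully $\Phi(E)$-special condition on the lifted ideal $\fd$, so $\fd\in\mathcal I(\Phi(E))$, and then $\fd/\fc\in\mathcal I_{R/\fc}(J)$ by (ii). The displayed final inclusion is then immediate from (ii) applied to the lifts $\fd\supseteq\fc$.

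\textbf{Main obstacle.} The delicate point is the clean construction of the maps $\theta_n$ in (v) and the bookkeeping needed to translate an annihilation statement inside $(R/\fc)^{(n)}\otimes_{R/\fc}E_{R/\fc}$ into the corresponding statement inside $Rx^n\otimes_R E$; once this bridge is in place, all five parts follow readily from the fully $\Phi(E)$-special hypothesis on $\fc$ and the Galois correspondence machinery already developed.
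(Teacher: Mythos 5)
Your plan for parts (i)--(iv) is sound and essentially reconstructs what the paper compresses into a single sentence (``once $J_0=(0:_E\fc)$ is known, run the arguments of \cite[Theorem 3.1]{hbf}''): part (i) from Lemma~\ref{hbf1.6} and Reminders~\ref{hbfsect1}(ii), \ref{hbf1.5}(iii); part (ii) from the Galois correspondence and the fact that $(R/\fc)[x,f]$-submodules of $J$ are the same as $R[x,f]$-submodules of $\Phi(E)$ contained in $J$; part (iii) from the fully $\Phi(E)$-special hypothesis on $\fc$; and part (iv) by passing to the completion (where $R/\fc$ $F$-pure is equivalent to $\widehat{R/\fc}$ $F$-pure via the injective hull $E_{R/\fc}=E_{\widehat{R/\fc}}$).

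The proposal has a genuine gap in part (v), however. After choosing $K=\ann_{\Phi_{R/\fc}(J_0)}\bigl((\fd/\fc)(R/\fc)[x,f]\bigr)$, you assert that
\[
K_0=(0:_{E_{R/\fc}}\fd/\fc).
\]
But in general one only has $K_0\subseteq(0:_{E_{R/\fc}}\fd/\fc)$, and equality is, by Lemma~\ref{fsi.1} applied to the ring $R/\fc$, precisely the statement that $\fd/\fc$ is \emph{fully} $\Phi_{R/\fc}(J_0)$-special. The hypothesis of the theorem is that every $\Phi_R(E)$-special ideal of $R$ is fully $\Phi_R(E)$-special; it says nothing about $R/\fc$, and there is no obvious reason for this property to be inherited by $R/\fc$ under the stated hypotheses (it does pass in the concrete cases mentioned parenthetically --- complete, $F$-finite, or a quotient of an excellent regular local ring --- because those conditions are themselves inherited by $R/\fc$, but the theorem's formal hypothesis is weaker). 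Without the equality $K_0=(0:_{E_{R/\fc}}\fd/\fc)$, pushing forward along $\theta_n$ only gives that $\fd R[x,f]$ annihilates the $R[x,f]$-submodule of $\Phi(E)$ generated by $K_0$; the $R$-annihilator of that submodule may be strictly larger than $\fd$, so you cannot conclude $\fd\in\mathcal I(\Phi(E))$. The argument the paper gestures at (via \cite[Theorem 3.1]{hbf}) establishes instead that the natural homogeneous $(R/\fc)[x,f]$-homomorphism $\psi\colon\Phi_{R/\fc}(J_0)\to J$ (the map assembled from your $\theta_n$) is \emph{injective}; once that is in hand, $\mathcal I(\Phi_{R/\fc}(J_0))\subseteq\mathcal I_{R/\fc}(J)$ is immediate, since any $(R/\fc)[x,f]$-submodule of $\Phi_{R/\fc}(J_0)$ is carried isomorphically onto a submodule of $J$ with the same $R/\fc$-annihilator. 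Your proposal does not address the injectivity of $\psi$, and the shortcut you take in its place is not justified by the hypotheses.
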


\begin{proof} Since the
$\Phi(E)$-special ideal $\fc$ is fully $\Phi(E)$-special, we have
$J_0 = (0:_{E}\fc)$. Given this observation, one can now use the
arguments employed in the proof of \cite[Theorem 3.1]{hbf} to
furnish a proof of this theorem.
\end{proof}

The next corollary follows from Theorem \ref{fp.11} just as, in
\cite{hbf}, Corollary 3.2 follows from Theorem 3.1.

\begin{cor}
\label{fp.12} Suppose that $(R,\fm)$ is local, $F$-pure and that
every $\Phi(E)$-special ideal of $R$ is fully $\Phi(E)$-special.
(For example, by Theorem {\rm \ref{pa.3}}, this would be the case if
$R$ were a homomorphic image of an excellent regular local ring of
characteristic $p$.) Let $\fc$ be a proper ideal of $R$ that is
$\Phi(E)$-special. Denote $R/\fc$ by $\overline{R}$, and note that
$\overline{R}$ is $F$-pure, by Theorem\/ {\rm \ref{fp.11}(iv)}. Let
$T$ be a multiplicatively closed subset of $\overline{R}$ which is
the complement in $\overline{R}$ of the union of finitely many prime
ideals. The {\em finitistic $T$-test ideal $\tau^{{\rm fg},T}
(\overline{R})$ of $\overline{R}$\/} is defined to be
$\bigcap_L(0:_{\overline{R}}0^{*,T}_L)$, where the intersection is
taken over all finitely generated $\overline{R}$-modules $L$.

\begin{enumerate}
\item If\/ $\fh$ denotes the unique ideal of $R$ that contains $\fc$ and
is such that $\fh/\fc = \tau^{{\rm fg},T}(\overline{R})$, the
finitistic $T$-test ideal of $\overline{R}$, then $\fh \in
\mathcal{I}(\Phi(E))$.
\item In particular, if\/ $\fh'$ denotes the unique ideal of $R$ that contains $\fc$ and
is such that $\fh'/\fc = \tau(\overline{R})$, the test ideal of
$\overline{R}$, then $\fh' \in \mathcal{I}(\Phi(E))$.
\item If\/ $\fg$ denotes the unique ideal of $R$ that contains $\fc$ and
is such that $\fg/\fc = \tau^T(\overline{R})$, the $T$-test ideal of
$\overline{R}$, then $\fg \in \mathcal{I}(\Phi(E))$.
\item In particular, if\/ $\fg'$ denotes the unique ideal of $R$ that contains $\fc$ and
is such that $\fg'/\fc = \widetilde{\tau}(\overline{R})$, the big
test ideal of $\overline{R}$, then $\fg' \in \mathcal{I}(\Phi(E))$.
\end{enumerate}
\end{cor}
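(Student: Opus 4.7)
The plan is to reduce everything to the analogous statements about $\overline{R} := R/\fc$ and then transfer back to $R$ via Theorem \ref{fp.11}(v). By Theorem \ref{fp.11}(iv) the ring $\overline{R}$ is $F$-pure, and by part (iii) the $0$th component $J_0$ is isomorphic, as an $\overline{R}$-module, to $E_{\overline{R}}(\overline{R}/\overline{\fm})$ (where $\overline{\fm} = \fm/\fc$); consequently there is a natural identification $\mathcal{I}(\Phi_{\overline{R}}(J_0)) = \mathcal{I}_{\overline{R}}(\Phi_{\overline{R}}(E_{\overline{R}}(\overline{R}/\overline{\fm})))$. Thus, for any ideal $\fd \supseteq \fc$ of $R$, Theorem \ref{fp.11}(v) tells us that $\fd \in \mathcal{I}(\Phi(E))$ as soon as $\fd/\fc$ lies in the (finite) set $\mathcal{I}_{\overline{R}}(\Phi_{\overline{R}}(E_{\overline{R}}(\overline{R}/\overline{\fm})))$. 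So the task reduces to showing that each of the four test ideals of $\overline{R}$ appearing in the corollary lies in that set.

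For parts (iii) and (iv), I would simply apply Lemma \ref{hbf1.6} to the $F$-pure local ring $\overline{R}$, with $T$ in place of $S$: that lemma identifies $\tau^T(\overline{R})$ as $\fb^{T,\Phi_{\overline{R}}(E_{\overline{R}}(\overline{R}/\overline{\fm}))}$, the smallest member of $\mathcal{I}_{\overline{R}}(\Phi_{\overline{R}}(E_{\overline{R}}(\overline{R}/\overline{\fm})))$ meeting $T$, so in particular this ideal belongs to that set. Part (iii) then follows from the previous paragraph, and part (iv) is the special case $T = \overline{R}^{\circ}$, the complement in $\overline{R}$ of the finite set of minimal primes of $\overline{R}$, which is a multiplicatively closed set of the form required by Lemma \ref{hbf1.6}.

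Parts (i) and (ii) proceed by exactly the same template, once one knows that the finitistic $T$-test ideal $\tau^{{\rm fg},T}(\overline{R})$ of $\overline{R}$ also lies in $\mathcal{I}_{\overline{R}}(\Phi_{\overline{R}}(E_{\overline{R}}(\overline{R}/\overline{\fm})))$. That assertion is precisely what is established for an $F$-pure complete local ring in the course of proving \cite[Corollary 3.2]{hbf}; one repeats that argument verbatim, applied now to the $F$-pure ring $\overline{R}$, to obtain the desired membership, and then concludes via Theorem \ref{fp.11}(v). Part (ii) is then the classical specialization $T = \overline{R}^{\circ}$.

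The main point requiring care -- and the reason a hypothesis beyond the completeness assumption of \cite[Corollary 3.2]{hbf} is needed -- is the availability of Theorem \ref{fp.11} itself. It is the blanket assumption that every $\Phi(E)$-special ideal of $R$ is fully $\Phi(E)$-special (supplied in our broader setting by Theorem \ref{pa.3}, and in the complete case by Proposition \ref{res.2}) that makes the crucial identification $J_0 = (0:_E \fc)$ possible, and this is what permits passage from the statements about $\overline{R}$ back to the statements about $R$. Once Theorem \ref{fp.11} is in place, no further obstacle arises and the argument of \cite[Corollary 3.2]{hbf} transports formally.
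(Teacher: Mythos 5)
Your proposal is correct and follows essentially the same route as the paper: the paper's proof is the one-line instruction to adapt the argument of \cite[Corollary 3.2]{hbf} (which there follows from \cite[Theorem 3.1]{hbf}), and you have correctly identified that the transfer mechanism is Theorem \ref{fp.11}, specifically the identification of $J_0$ with $E_{\overline{R}}(\overline{R}/\overline{\fm})$ in part (iii) together with the containment in part (v), with Lemma \ref{hbf1.6} (applied to $\overline{R}$) supplying parts (iii)/(iv) and the finitistic-test-ideal argument from \cite[Corollary 3.2]{hbf}, run for $\overline{R}$, supplying parts (i)/(ii). Your elaboration is more explicit than the paper's terse proof but matches its intended structure.
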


\begin{proof} Straightforward modifications of the arguments given
in the proof of \cite[Corollary 3.2]{hbf} will provide a proof for
this.
\end{proof}

\begin{lem}
\label{hbffp.16} Assume that $(R,\fm)$ is local, $F$-pure and a
homomorphic image of an excellent regular local ring of
characteristic $p$.
\begin{enumerate}
\item There is a strictly ascending chain $0
= \tau_0 \subset \tau_1 \subset \cdots \subset \tau_t \subset
\tau_{t+1} = R$ of radical ideals of $R$ such that, for each $i = 0,
\ldots, t$, the reduced local ring $R/\tau_i$ is $F$-pure and its
test ideal is $\tau_{i+1}/\tau_i$. We call this the {\em test ideal
chain of $R$}. All of $\tau_0 = 0,\tau_1, \cdots ,\tau_t$, and all
their associated primes, belong to ${\mathcal I}(\Phi(E))$.
\item There is a strictly ascending chain $0
= \widetilde{\tau}_0 \subset \widetilde{\tau}_1 \subset \cdots
\subset \widetilde{\tau}_w \subset \widetilde{\tau}_{w+1} = R$ of
radical ideals in $\mathcal{I}(\Phi(E))$ such that, for each $i = 0,
\ldots, w$, the reduced local ring $R/\widetilde{\tau}_i$ is
$F$-pure and its big test ideal is
$\widetilde{\tau}_{i+1}/\widetilde{\tau}_i$. We call this the {\em
big test ideal chain of $R$}. All of $\widetilde{\tau}_0 =
0,\widetilde{\tau}_1, \cdots ,\widetilde{\tau}_w$, and all their
associated primes, belong to ${\mathcal I}(\Phi(E))$.
\end{enumerate}
\end{lem}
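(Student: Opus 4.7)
The plan is to construct each chain by induction, repeatedly taking the preimage in $R$ of the (classical or big) test ideal of the quotient by the previous term and invoking Corollary \ref{fp.12} to stay within $\mathcal{I}(\Phi(E))$ at each step. For part (i), I would set $\tau_0 := 0$, which belongs to $\mathcal{I}(\Phi(E))$ as the $R$-annihilator of $\Phi(E)$ itself. Supposing inductively that $\tau_i \in \mathcal{I}(\Phi(E))$ has been constructed and is a proper ideal of $R$, Theorem \ref{fp.11}(iv) yields that $R/\tau_i$ is $F$-pure. I would then define $\tau_{i+1}$ to be the unique ideal of $R$ with $\tau_{i+1} \supseteq \tau_i$ and $\tau_{i+1}/\tau_i = \tau(R/\tau_i)$; since $R$ satisfies the hypotheses of Corollary \ref{fp.12} by Theorem \ref{pa.3}, part (ii) of that corollary, applied with $\fc = \tau_i$, gives $\tau_{i+1} \in \mathcal{I}(\Phi(E))$.

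Strict ascent $\tau_i \subsetneq \tau_{i+1}$ follows from non-vanishing of $\tau(R/\tau_i)$. I would justify this by observing that $\tau_i$ is radical (combining Theorem \ref{pa.3} with Lemma \ref{res.5z}), so $R/\tau_i$ is reduced; being also excellent (as a quotient of the excellent regular local ring that surjects onto $R$) and $F$-pure, it admits a classical test element, whence $\tau(R/\tau_i) \neq 0$. Finiteness of $\mathcal{I}(\Phi(E))$ then forces the strictly ascending chain to terminate after finitely many steps with $\tau_{t+1} = R$. For each $i \leq t$, the ideal $\tau_i$ is $\Phi(E)$-special and hence fully $\Phi(E)$-special by Theorem \ref{pa.3}; Lemma \ref{res.5z} then gives that all associated primes of $\tau_i$ are also fully $\Phi(E)$-special, and therefore lie in $\mathcal{I}(\Phi(E))$.

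Part (ii) proceeds by the identical induction, with $\widetilde{\tau}(R/\widetilde{\tau}_i)$ replacing $\tau(R/\widetilde{\tau}_i)$ at every stage and Corollary \ref{fp.12}(iv) replacing (ii). Non-vanishing of the big test ideal at each stage is automatic from \ref{hbf1.5}(ii), which guarantees that every $F$-pure local ring of characteristic $p$ possesses an $R^\circ$-test element. The main technical point in the whole argument is the non-vanishing of the classical test ideal in part (i), which rests on the excellence hypothesis; the rest is essentially a finiteness-and-induction wrap-up of Theorem \ref{pa.3} and Corollary \ref{fp.12}, and should not present any serious difficulty.
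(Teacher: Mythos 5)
Your proof is correct and follows essentially the same path as the paper's: iterate the preimage-of-test-ideal construction, invoke Theorem~\ref{fp.11}(iv) to preserve $F$-purity, invoke Corollary~\ref{fp.12} to stay inside $\mathcal{I}(\Phi(E))$, and use Lemma~\ref{res.5z} for the associated-primes claim. The only cosmetic difference is that the paper organizes the termination as an induction on $\dim R$ (observing that $\dim(R/\tau_1) < \dim R$ since $\tau(R)$ meets $R^\circ$), whereas you appeal directly to the finiteness of $\mathcal{I}(\Phi(E))$ together with strict ascent; the two bookkeeping devices are interchangeable here.
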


\begin{note} In the case when $R$ is an ($F$-pure) homomorphic image
of an $F$-finite regular local ring, part (i) of this result is
known and due to Janet Cowden Vassilev \cite[\S3]{Cowde98}.
\end{note}

\begin{proof} (i) Set $\tau_1 := \tau(R)$, and note that $\tau(R)
\in {\mathcal I}(\Phi(E))$. If $\tau_1 \neq R$, apply Theorem
\ref{fp.11} with the choice $\fc = \tau(R) = \tau_1$. That shows
that $R/\tau_1$ is $F$-pure. Now argue by induction on $\dim R$,
noting that $R/\tau_1$ is a homomorphic image of an excellent
regular local ring of characteristic $p$. Use Theorem \ref{fp.11}(v)
to show that all of $\tau_0, \tau_1, \ldots, \tau_t$ belong to
${\mathcal I}(\Phi(E))$.

(ii) This is proved similarly.
\end{proof}

\section{\it The $F$-finite case}

In the $F$-finite case, the results above have strong connections
with work of K. Schwede in \cite{Schwe10}, and the purpose of this
section is to explore some of those connections. The introduction
contains a description of certain properties of the set of all
uniformly $F$-compatible ideals in an $F$-finite, $F$-pure local
ring $R$, and some of these are similar to properties of the set of
all fully $\Phi(E)$-special ideals of $R$: we shall show in this
section that, in this special case, an ideal of $R$ is uniformly
$F$-compatible if and only if it is $\Phi(E)$-special, and that this
is the case if and only if it is fully $\Phi(E)$-special.

\begin{defi}\label{nd.1} Suppose that $R$ is $F$-finite; let $\fb$
be an ideal of $R$. Then $\fb$ is said to be {\em uniformly
$F$-compatible\/} if, for every $n > 0$ and every $\phi \in
\Hom_R(R^{(n)}, R)$, we have $\phi(\fb^{(n)}) \subseteq \fb$.
\end{defi}

\begin{prop}
[Schwede {\cite[Lemma 5.1]{Schwe10}}]  \label{Schw5.1} Suppose that
$(R,\fm)$ is $F$-finite; let $\fb$ be an ideal of $R$. Then $\fb$ is
uniformly $F$-compatible if and only if $(0:_E\fb) \subseteq
(\ann_{\Phi(E)}(\fb R[x,f]))_0$.

Thus when $R$ is $F$-finite and $F$-pure, $\fb$ is uniformly
$F$-compatible if and only if it is fully $\Phi(E)$-special.
\end{prop}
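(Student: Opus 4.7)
The plan is to rewrite the inclusion $(0:_E\fb) \subseteq (\ann_{\Phi(E)}(\fb R[x,f]))_0$ as an explicit vanishing condition inside the Frobenius-twisted tensor products $Rx^n \otimes_R E$, and then use the $F$-finite hypothesis to dualize that condition into uniform $F$-compatibility via a finite-presentation / Matlis argument.

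First, I would unpack the annihilator condition using the graded decomposition $\Phi(E) = \bigoplus_{n \geq 0} Rx^n \otimes_R E$. Since the two-sided ideal $\fb R[x,f]$ equals $\bigoplus_{n \geq 0} \fb x^n$, and for $e \in E = \Phi(E)_0$ the action of $bx^n$ is $bx^n \cdot e = bx^n \otimes e \in Rx^n \otimes_R E$, the containment is equivalent to the following: for every $e \in (0:_E\fb)$, every $b \in \fb$, and every $n \geq 1$, one has $bx^n \otimes e = 0$ in $Rx^n \otimes_R E$. (The $n=0$ case is automatic from $e \in (0:_E\fb)$.)

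The core step then uses $F$-finiteness. Identifying the $(R,R)$-bimodule $Rx^n$ with $R^{(n)}$ via $rx^n \mapsto r$, the $F$-finite hypothesis ensures that $R^{(n)}$ is finitely generated, hence finitely presented (since $R$ is Noetherian), as a right $R$-module. Applying $-\otimes_R E$ and $\Hom_R(\Hom_R(-,R), E)$ to a finite presentation of $R^{(n)}$ and invoking the injectivity of $E$ yields the standard natural isomorphism
\[
Rx^n \otimes_R E \xrightarrow{\,\cong\,} \Hom_R\bigl(\Hom_R(R^{(n)}, R), E\bigr),
\]
under which $bx^n \otimes e$ corresponds to the evaluation $\phi \mapsto \phi(b) \cdot e$. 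In particular, $bx^n \otimes e = 0$ if and only if $\phi(b) \cdot e = 0$ in $E$ for every $\phi \in \Hom_R(R^{(n)}, R)$.

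Combining these two reformulations and swapping quantifiers, the original containment becomes: for all $n \geq 1$, all $\phi \in \Hom_R(R^{(n)}, R)$, and all $b \in \fb$, the element $\phi(b) \in R$ annihilates the whole of $(0:_E\fb)$, i.e.\ $\phi(b) \in \ann_R\bigl((0:_E\fb)\bigr)$. The elementary Matlis identity $\ann_R \Hom_R(R/\fb, E) = \fb$ then collapses this to $\phi(\fb^{(n)}) \subseteq \fb$ for every $n \geq 1$ and every $\phi$, which is precisely uniform $F$-compatibility. The final assertion of the proposition, under the additional $F$-pure hypothesis, is then immediate from Lemma \ref{fsi.1} and Definition \ref{fsi.2}. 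The main obstacle is establishing the displayed natural isomorphism in the $F$-finite setting while carefully tracking the Frobenius-twisted bimodule structures on $Rx^n \cong R^{(n)}$; once that is secured, everything else is bookkeeping.
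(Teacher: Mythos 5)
Your proposal is correct and follows essentially the same route as the paper: both unpack $(0:_E\fb) \subseteq (\ann_{\Phi(E)}(\fb R[x,f]))_0$ as the vanishing of $rx^n\otimes e$ for $r\in\fb$, $e\in(0:_E\fb)$, invoke $F$-finiteness to use the natural isomorphism $R^{(n)}\otimes_R E \cong \Hom_R(\Hom_R(R^{(n)},R),E)$, and then use the cogenerator property of $E$ to recover $\phi(\fb^{(n)})\subseteq\fb$. The only organizational difference is that you argue element-wise and conclude via the Matlis annihilator identity $\ann_R(0:_E\fb)=\fb$, whereas the paper applies $D=\Hom_R(-,E)$ to a chain of $R$-homomorphisms and invokes faithfulness of $D$; these are the same tool in different clothing.
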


\begin{proof} Let $n\in \N$ and $r \in R$. Multiplication
by $r$ yields an $R$-homomorphism of $R^{(n)}$, which, strictly
speaking, we should denote by $r\Id_{R^{(n)}}$. Also $f^n : R \lra
R^{(n)}$ is an $R$-homomorphism. Thus we can consider the
composition of $R$-homomorphisms $ R \stackrel{f^n}{\lra} R^{(n)}
\stackrel{r}{\longrightarrow} R^{(n)} .$

Application of the functor $\: {\scriptscriptstyle \bullet} \:
\otimes_RE$ yields a composition of $R$-homomorphisms $$R \otimes_RE
\lra R^{(n)}\otimes_RE \stackrel{r}{\longrightarrow}
R^{(n)}\otimes_RE,$$ where the `$r$' over the second arrow is an
abbreviation for $r\Id_{R^{(n)}}\otimes_R E$. But $R^{(n)} \cong
Rx^n$ as $(R,R)$-bimodules; furthermore, $(0:_E\fb) \cong
\Hom_R(R/\fb,E)$. It follows that $(0:_E\fb) \subseteq
(\ann_{\Phi(E)}(\fb R[x,f]))_0$ if and only if, for all $n \in \N$
and all $r \in \fb$, the composition
$$
(0:_E\fb) \stackrel{\subseteq}{\lra}E \stackrel{\cong}{\lra} R
\otimes_RE \lra R^{(n)}\otimes_RE \stackrel{r}{\longrightarrow}
R^{(n)}\otimes_RE
$$
(in which the second map is the natural isomorphism) is zero.

Let $M$ be an $R$-module. Recall that there is an $R$-homomorphism
\[
\xi_{M} : M \otimes_R E \longrightarrow \Hom_R(\Hom_R(M,R),E)
\]
such that, for $m \in M$, $e \in E$ and $g \in \Hom_R(M,R)$, we have
$\left(\xi_{M}(m \otimes e) \right) (g) = g(m)e$\@. Furthermore, as
$M$ varies, the $\xi_{M}$ constitute a natural transformation of
functors; also $\xi_{M}$ is an isomorphism whenever $M$ is finitely
generated. We shall use $D$ to denote the functor $\Hom_R(\:
{\scriptscriptstyle \bullet} \: ,E)$.

Since $R^{(n)}$ is a finitely generated $R$-module, $(0:_E\fb)
\subseteq (\ann_{\Phi(E)}(\fb R[x,f]))_0$ if and only if, for all $n
\in \N$ and all $r \in \fb$, the composition
$$
D(R/\fb) \rightarrow D(R) \stackrel{\cong}{\rightarrow}
D(\Hom_R(R,R)) \rightarrow D(\Hom_R(R^{(n)}, R))
\stackrel{r}{\rightarrow} D(\Hom_R(R^{(n)}, R))
$$
is zero. (Here, the first map is induced from the natural
epimorphism $R \lra R/\fb$, the second map is the natural
isomorphism, and the sequence from the middle term rightwards is the
result of application of the functor $\Hom_R(\Hom_R(\:
{\scriptscriptstyle \bullet} \:,R),E)$ to the composition $ R
\stackrel{f^n}{\lra} R^{(n)} \stackrel{r}{\longrightarrow} R^{(n)}$
described at the beginning of the proof.)

Since $D$ is a faithful functor (because $E$ is an injective
cogenerator for $R$), we can deduce that $(0:_E\fb) \subseteq
(\ann_{\Phi(E)}(\fb R[x,f]))_0$ if and only if, for all $n \in \N$
and all $r \in \fb$, the composition
$$
\Hom_R(R^{(n)}, R)\stackrel{r}{\longrightarrow}\Hom_R(R^{(n)}, R)
\lra \Hom_R(R,R) \stackrel{\cong}{\lra} R \lra R/\fb
$$
is zero, that is, if and only if $\fb$ is uniformly $F$-compatible.
\end{proof}

\begin{prop} [Schwede {\cite{Schwe10}}] \label{Schw} Suppose that
$(R,\fm)$ is $F$-finite, and let $\fa$ be an ideal of $R$. Note that
the completion ${\widehat R}$ of $R$ is again $F$-finite.

\begin{enumerate}
\item If $\fa$ is a uniformly
$F$-compatible ideal of $R$, then $\fa{\widehat R}$ is a uniformly
$F$-compatible ideal of ${\widehat R}$. See Schwede \cite[Lemma
3.9]{Schwe10}.
\item If $\fC$ is a uniformly
$F$-compatible ideal of ${\widehat R}$, then $\fC \cap R$ is a
uniformly $F$-compatible ideal of $R$. See Schwede \cite[Lemma
3.8]{Schwe10}.
\end{enumerate}
\end{prop}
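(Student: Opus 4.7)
My plan is to handle the two parts with different tools. For (i), the idea is to transport the characterization of uniform $F$-compatibility provided by Proposition \ref{Schw5.1} across the completion using Lemma \ref{lm.3}. By Proposition \ref{Schw5.1}, $\fa$ is uniformly $F$-compatible in $R$ if and only if $(0:_E\fa) \subseteq (\ann_{\Phi_R(E)}(\fa R[x,f]))_0$. Lemma \ref{lm.3} identifies $\Phi_R(E)$ with $\Phi_{\widehat{R}}(E)$ as $\widehat{R}[x,f]$-modules, gives the equality $\ann_{\Phi_R(E)}(\fa R[x,f]) = \ann_{\Phi_R(E)}((\fa\widehat{R})\widehat{R}[x,f])$, and shows that $(0:_E\fa) = (0:_E\fa\widehat{R})$. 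Applying Proposition \ref{Schw5.1} a second time, now to the $F$-finite ring $\widehat{R}$, the transformed inclusion is exactly the statement that $\fa\widehat{R}$ is uniformly $F$-compatible in $\widehat{R}$.

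For (ii), the same strategy does not immediately work, because there is no reason to expect $(\fC \cap R)\widehat{R} = \fC$, and the equality $(0:_E \fC) = (0:_E \fC \cap R)$ may genuinely fail. Instead I would argue directly from the definition. Given $\phi \in \Hom_R(R^{(n)}, R)$, the $F$-finiteness of $R$ ensures that $R^{(n)}$ is a finitely presented $R$-module, so flatness of $\widehat{R}$ over $R$ yields a canonical $\widehat{R}$-linear extension $\widehat{\phi} : \widehat{R}^{(n)} \to \widehat{R}$ of $\phi$ obtained by base-change along $R \to \widehat{R}$. For any $c \in \fC \cap R$ we then have $\widehat{\phi}(c) = \phi(c) \in R$; on the other hand, uniform $F$-compatibility of $\fC$ in $\widehat{R}$ gives $\widehat{\phi}(c) \in \fC$. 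Hence $\phi(c) \in \fC \cap R$, as required.

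The main obstacle will be setting up the base-change isomorphism $\Hom_R(R^{(n)}, R) \otimes_R \widehat{R} \cong \Hom_{\widehat{R}}(\widehat{R}^{(n)}, \widehat{R})$ with the correct $(R,R)$-bimodule conventions for the twisted right action via $f^n$, so that the extended map $\widehat{\phi}$ genuinely restricts to $\phi$ under the inclusion $R^{(n)} \hookrightarrow \widehat{R}^{(n)}$. Once that setup is in place, both parts follow quickly from the identifications described above.
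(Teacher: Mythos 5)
Your proposal is correct, and the two parts deserve separate comment.

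For part (ii) your approach is essentially the paper's: extend $\phi\in\Hom_R(R^{(n)},R)$ to an $\widehat{R}$-linear map $\widehat{\phi}$ by base change, apply uniform $F$-compatibility of $\fC$ in $\widehat{R}$, and restrict back. The paper carries this out via explicit isomorphisms $\gamma : R^{(n)}\otimes_R\widehat{R}\to\widehat{R}^{(n)}$ and $\delta : R\otimes_R\widehat{R}\to\widehat{R}$, writing $\widehat{\phi}=\delta\circ(\phi\otimes\Id_{\widehat{R}})\circ\gamma^{-1}$, which is exactly the bookkeeping you flag as the technical obstacle. Finite generation of $R^{(n)}$ (from $F$-finiteness) and flatness of $\widehat{R}$ over $R$ make all of this work; you would just need to verify that $\gamma$ carries $\fc^{(n)}\otimes_R\widehat{R}$ onto $(\fc\widehat{R})^{(n)}$, which the paper records.

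For part (i), however, you take a genuinely different route. The paper argues directly: using the base-change isomorphism $\Hom_R(R^{(n)},R)\otimes_R\widehat{R}\cong\Hom_{\widehat{R}}(\widehat{R}^{(n)},\widehat{R})$, it writes an arbitrary $\theta\in\Hom_{\widehat{R}}(\widehat{R}^{(n)},\widehat{R})$ as an $\widehat{R}$-linear combination of extensions $\phi_i\otimes\Id_{\widehat{R}}$ of maps $\phi_i\in\Hom_R(R^{(n)},R)$, and then applies $\phi_i(\fa^{(n)})\subseteq\fa$ termwise. Your proof instead routes through Proposition \ref{Schw5.1} (which requires only $F$-finiteness, not $F$-purity) and the identification in Lemma \ref{lm.3}: the socle-side condition $(0:_E\fa)\subseteq(\ann_{\Phi_R(E)}(\fa R[x,f]))_0$ is literally unchanged when translated to $\widehat{R}$ because every component of $\Phi_R(E)$ is $\fm$-power torsion, and then \ref{Schw5.1} applied to $\widehat{R}$ converts it back. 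This is slicker, avoids the generating-set argument for $\Hom_{\widehat{R}}$, and in fact proves the stronger biconditional: $\fa$ is uniformly $F$-compatible in $R$ if and only if $\fa\widehat{R}$ is uniformly $F$-compatible in $\widehat{R}$. (It does not subsume part (ii), since $\fC$ there need not be extended from $R$ — you correctly note this.) There is no circularity: Proposition \ref{Schw5.1} and Lemma \ref{lm.3} are both proved independently of the present proposition. The trade-off is that the paper's proof of (i) is more elementary and self-contained, relying only on the flat base-change of $\Hom$, whereas yours relies on the Matlis-duality machinery underlying \ref{Schw5.1}.
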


\begin{proof} For a finitely generated $R$-module $M$, we identify
${\widehat M}$ with $M \otimes_R{\widehat R}$ in the usual way, and
we note that there is a natural ${\widehat R}$-isomorphism $\psi_M :
\Hom_R(M,R)\otimes_R{\widehat R} \stackrel{\cong}{\lra}
\Hom_{{\widehat R}}(M\otimes_R{\widehat R},R\otimes _R{\widehat R})$
for which $\psi_M(g \otimes {\widehat r}) = {\widehat r}(g \otimes
\Id_{{\widehat R}})$ for all $g \in \Hom_R(M,R)$ and ${\widehat r}
\in {\widehat R}$.  Let $n \in \N$. Consideration of Cauchy
sequences shows that $\widehat{M^{(n)}} = {\widehat M}^{(n)}$. In
particular, $\widehat{R^{(n)}} = {\widehat R}^{(n)}$ and
$\widehat{\fa^{(n)}} = (\widehat{\fa})^{(n)} = (\fa{\widehat
R})^{(n)}$.

There is an ${\widehat R}$-isomorphism $\gamma :
R^{(n)}\otimes_R{\widehat R}  \stackrel{\cong}{\lra} {\widehat
R}^{(n)}$ which maps $\fa^{(n)}\otimes_R{\widehat R}$ onto
$(\fa{\widehat R})^{(n)}$. Also, the natural ${\widehat
R}$-isomorphism $\delta : R\otimes_R{\widehat
R}\stackrel{\cong}{\lra} {\widehat R}$ maps $\fa \otimes_R{\widehat
R}$ onto $\fa {\widehat R}$.

(i) Let $\theta \in \Hom_{{\widehat R}}(R^{(n)}\otimes_R{\widehat
R},R\otimes _R{\widehat R})$. By the above, there exist $\phi_1,
\ldots, \phi_t \in \Hom_R(R^{(n)},R)$ and ${\widehat r}_1, \ldots,
{\widehat r}_t \in {\widehat R}$ such that $\theta = {\widehat
r}_1(\phi_1\otimes\Id_{{\widehat R}}) + \cdots + {\widehat
r}_t(\phi_t\otimes\Id_{{\widehat R}})$. Since $\phi_i(\fa^{(n)})
\subseteq \fa$ for all $n \in \N$ and $i = 1, \ldots, t$, we see
that $\theta(\fa^{(n)}\otimes_R{\widehat R}) \subseteq
\fa\otimes_R{\widehat R}$ for all $n\in\N$. Use of the
above-mentioned isomorphisms $\gamma$ and $\delta$ now enables us to
conclude that $\fa{\widehat R}$ is a uniformly $F$-compatible ideal
of ${\widehat R}$.

(ii) Let $\phi \in \Hom_R(R^{(n)},R)$, and set $\fc := \fC \cap R$.
Then $$\phi\otimes\Id_{{\widehat R}} \in \Hom_{{\widehat
R}}(R^{(n)}\otimes_R{\widehat R},R\otimes _R{\widehat R})$$ and
$\delta \circ (\phi\otimes\Id_{{\widehat R}}) \circ \gamma^{-1}$
maps $\fC^{(n)}$ into $\fC$, and therefore maps $(\fc {\widehat
R})^{(n)}$ into $\fC$. Therefore $\delta \circ
(\phi\otimes\Id_{{\widehat R}})$ maps $\fc^{(n)}\otimes _R{\widehat
R}$ into $\fC$, so that $\phi(a) \in \fC\cap R = \fc$ for all $a \in
\fc^{(n)}$. Therefore $\fc$ is a uniformly $F$-compatible ideal of
$R$.
\end{proof}

\begin{thm}
\label{res.3} Suppose that $(R,\fm)$ is $F$-pure and $F$-finite.
Then each $\Phi(E)$-special ideal $\fa$ of $R$ is automatically
fully $\Phi(E)$-special.
\end{thm}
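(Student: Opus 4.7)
The plan is to reduce to the complete case, and then translate between the two characterisations ``fully $\Phi(E)$-special'' and ``uniformly $F$-compatible'' supplied by Proposition \ref{Schw5.1}. The completion $\widehat R$ is complete, is $F$-pure by Hochster--Roberts (as recalled before Lemma \ref{lm.3}), and is $F$-finite (as recalled in the statement of Proposition \ref{Schw}); so Proposition \ref{res.2}, applied to $\widehat R$, already tells us that every $\Phi_{\widehat R}(E)$-special ideal of $\widehat R$ is automatically fully $\Phi_{\widehat R}(E)$-special. The task is to transfer this conclusion down to $R$.

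Concretely, for $\fa$ a $\Phi_R(E)$-special ideal of $R$, I would proceed as follows. By Lemma \ref{lm.3}, there is a $\Phi_{\widehat R}(E)$-special ideal $\fB$ of $\widehat R$ with $\fa = \fB \cap R$. By the observation above, $\fB$ is fully $\Phi_{\widehat R}(E)$-special, and hence, applying Proposition \ref{Schw5.1} to the $F$-finite ring $\widehat R$, the ideal $\fB$ is uniformly $F$-compatible in $\widehat R$. Schwede's Proposition \ref{Schw}(ii) then yields that $\fa = \fB \cap R$ is uniformly $F$-compatible in $R$. A final application of Proposition \ref{Schw5.1}, this time to $R$ itself, gives that $\fa$ is fully $\Phi_R(E)$-special, as required.

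The main subtlety, and the reason the argument must be routed through uniform $F$-compatibility rather than through Lemma \ref{fsi.3}, is that the $\Phi_{\widehat R}(E)$-special ideal $\fB$ furnished by Lemma \ref{lm.3} need not coincide with $\fa \widehat R$; knowing that $\fB$ is fully $\Phi_{\widehat R}(E)$-special therefore conveys, on its own, no direct information about $\fa \widehat R$. It is Schwede's observation that uniform $F$-compatibility behaves well under contraction from $\widehat R$ to $R$ (Proposition \ref{Schw}(ii)) that supplies the missing descent step, and the $F$-finite hypothesis is used only to enable the translation via Proposition \ref{Schw5.1} at both ends of the chain.
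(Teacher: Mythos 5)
Your proposal is correct and follows exactly the same route as the paper's own proof: pass to $\widehat R$, use Lemma \ref{lm.3} to write $\fa$ as the contraction of a $\Phi_{\widehat R}(E)$-special ideal, upgrade that ideal to fully special via Proposition \ref{res.2}, translate to uniform $F$-compatibility by Proposition \ref{Schw5.1}, contract via Proposition \ref{Schw}(ii), and translate back. Your closing remark on why Lemma \ref{fsi.3} does not suffice on its own (since the ideal supplied by Lemma \ref{lm.3} need not be $\fa\widehat R$) is a sound and useful observation about what forces the detour through uniform $F$-compatibility.
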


\begin{proof} Note that ${\widehat R}$ is also $F$-pure, by Hochster
and Roberts \cite[Corollary 6.13]{HocRob74}. Also, ${\widehat R}$ is
$F$-finite, because the completion of the finitely generated
$R$-module $R^{(1)}$ is ${\widehat R}^{(1)}$.

Thus, by definition, $\fa$ is the $R$-annihilator of an
$R[x,f]$-submodule of $\Phi(E)$. It follows from Lemma \ref{lm.3}
that $\fa = \fA \cap R$ for some ideal $\fA$ of $\widehat{R}$ that
is the $\widehat{R}$-annihilator of an $\widehat{R}[x,f]$-submodule
of $\Phi_{\widehat{R}}(E)$. Thus $\fA$ is
$\Phi_{\widehat{R}}(E)$-special. It follows from Proposition
\ref{res.2} that $\fA$ is a fully $\Phi_{\widehat{R}}(E)$-special
ideal of $\widehat{R}$, and so is uniformly $F$-compatible, by
Proposition \ref{Schw5.1}. Therefore, by Proposition \ref{Schw}(ii),
the contraction $\fA \cap R = \fa$ is a uniformly $F$-compatible
ideal of $R$, and is therefore fully $\Phi(E)$-special, by
Proposition \ref{Schw5.1} again.
\end{proof}

\begin{cor}
\label{res.3c} Suppose that $(R,\fm)$ is $F$-pure and $F$-finite;
let $\fa$ be an ideal of $R$. Then the following statements are
equivalent:
\begin{enumerate}
\item $\fa$ is uniformly $F$-compatible;
\item $\fa$ is $\Phi(E)$-special;
\item $\fa$ is fully $\Phi(E)$-special.
\end{enumerate}
\end{cor}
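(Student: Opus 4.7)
The proposal is that this corollary is a direct assembly of three ingredients already in place, with no new content to produce. The plan is simply to track the three implications (i) $\Leftrightarrow$ (iii), (iii) $\Rightarrow$ (ii), and (ii) $\Rightarrow$ (iii) using results already proved in the excerpt.

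First I would observe that the equivalence (i) $\Leftrightarrow$ (iii) is the content of Proposition \ref{Schw5.1} (the second sentence of its statement): under the $F$-finite and $F$-pure hypotheses, being uniformly $F$-compatible is characterized exactly by the inclusion $(0:_E\fa) \subseteq (\ann_{\Phi(E)}(\fa R[x,f]))_0$, which by Lemma \ref{fsi.1} and Definition \ref{fsi.2} is precisely full $\Phi(E)$-specialness.

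Next I would handle (iii) $\Rightarrow$ (ii), which requires no work: by Definition \ref{fsi.2}, fully $\Phi(E)$-special ideals are in particular $\Phi(E)$-special.

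Finally, for (ii) $\Rightarrow$ (iii), I would invoke Theorem \ref{res.3}, which asserts exactly that under these hypotheses every $\Phi(E)$-special ideal is automatically fully $\Phi(E)$-special. Chaining these together closes the cycle. Since there is essentially no obstacle, the main point of the write-up is just to make clear to the reader which previously established result furnishes each arrow; no calculation or auxiliary construction is needed.
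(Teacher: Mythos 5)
Your proposal is correct and is exactly the argument the paper gives: the paper's proof states that the corollary is immediate from Proposition \ref{Schw5.1} (your (i) $\Leftrightarrow$ (iii)) and Theorem \ref{res.3} (your (ii) $\Rightarrow$ (iii)), with the trivial (iii) $\Rightarrow$ (ii) left implicit. You have simply spelled out the same assembly in more detail.
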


\begin{proof} This is now immediate from Proposition \ref{Schw5.1} and Theorem
\ref{res.3}.
\end{proof}

\begin{qu}
\label{res.4} Suppose that $(R,\fm)$ is $F$-pure.

We have seen that each $\Phi(E)$-special ideal of $R$ is fully
$\Phi(E)$-special if $R$ is complete (by Proposition \ref{res.2}) or
if $R$ is a homomorphic image of an excellent regular local ring of
characteristic $p$ (by Theorem \ref{pa.3}) or if $R$ is $F$-finite
(by Theorem \ref{res.3}).

Note that each complete local ring is excellent, and that each
$F$-finite local ring of characteristic $p$ is excellent (by E. Kunz
\cite[Theorem 2.5]{Kunz76}). The above results raise the following
question.  If the $F$-pure local ring $R$ is excellent, is it the
case that every $\Phi(E)$-special ideal of $R$ is fully
$\Phi(E)$-special?
\end{qu}

\section{\it A generalization of Aberbach's and Enescu's splitting
prime}

Recall from \cite[Remark 2.8 and Proposition 2.9]{LyuSmi01} that G.
Lyubeznik and K. E. Smith defined $(R,\fm)$ to be {\em strongly
$F$-regular\/} (even in the case where $R$ is not $F$-finite)
precisely when the zero submodule of $E$ is tightly closed in $E$.
See M. Hochster and C. Huneke \cite[\S 8]{HocHun90}.

\begin{thm}
\label{res.6p} Suppose that $(R,\fm)$ is $F$-pure and that every
$\Phi(E)$-special ideal of $R$ is fully $\Phi(E)$-special. (For
example, by Theorem {\rm \ref{pa.3}}, this would be the case if $R$
were a homomorphic image of an excellent regular local ring of
characteristic $p$; it would also be the case if $R$ were
$F$-finite, by Theorem {\rm \ref{res.3}}.)
\begin{enumerate}
\item There exists a unique largest $\Phi(E)$-special proper ideal, $\fc$ say, of $R$ and
this is prime. Furthermore, $R/\fc$ is strongly $F$-regular.

\item Let $T$ be the $R[x,f]$-submodule of $\Phi(E)$ generated by
$(0:_E\fm) \subseteq R\otimes_RE$. Then $\grann_{R[x,f]}T = \fc
R[x,f]$.
\end{enumerate}
\end{thm}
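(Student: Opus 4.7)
For part (i), the existence and primality of the unique largest proper $\Phi(E)$-special ideal $\fc$ follow directly from Corollary \ref{res.6}, since our standing hypothesis identifies $\Phi(E)$-special with fully $\Phi(E)$-special. The real content is that $R/\fc$ is strongly $F$-regular, and I would derive this by applying Theorem \ref{fp.11} with the specific choice $\fc$. Parts (iii) and (iv) of that theorem give that $R/\fc$ is $F$-pure and that $J_0 \cong E_{R/\fc}((R/\fc)/(\fm/\fc))$, while part (v) yields
\[
\mathcal{I}(\Phi_{R/\fc}(J_0)) \subseteq \mathcal{I}_{R/\fc}(J) = \left\{ \fg/\fc : \fg \in \mathcal{I}(\Phi(E)),\ \fg \supseteq \fc \right\}.
\]
Maximality of $\fc$ collapses the right-hand side to $\{0, R/\fc\}$. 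Since $R/\fc$ is a domain, $(R/\fc)^{\circ} = R/\fc \setminus \{0\}$, and Lemma \ref{hbf1.6} identifies $\widetilde{\tau}(R/\fc)$ as the smallest member of $\mathcal{I}(\Phi_{R/\fc}(J_0))$ meeting this set; the only candidate is $R/\fc$ itself. Thus $\widetilde{\tau}(R/\fc) = R/\fc$, which (since $R/\fc$ is a domain) forces $0^{\ast}_{E_{R/\fc}} = 0$ and hence strong $F$-regularity of $R/\fc$ in the sense of Lyubeznik and Smith.

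For part (ii), write $\fb := (0:_R T)$ and aim to show $\fb = \fc$ by double inclusion. As $\Phi(E)$ is $x$-torsion-free (because $R$ is $F$-pure), its submodule $T$ is $x$-torsion-free too; and $T \neq 0$ because its generating socle $(0:_E \fm) \subseteq \Phi(E)_0$ is nonzero. Hence \cite[Lemma 1.9]{ga} gives $\grann_{R[x,f]} T = \fb R[x,f]$ with $\fb$ a proper, radical, $\Phi(E)$-special ideal of $R$, and so $\fb \subseteq \fc$ by maximality. For the reverse inclusion, I would use the (now available) fact that $\fc$ is fully $\Phi(E)$-special, i.e.\ $(0:_E \fc) \subseteq \ann_{\Phi(E)}(\fc R[x,f])$. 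Since $\fc \subseteq \fm$ we have $(0:_E \fm) \subseteq (0:_E \fc) \subseteq \ann_{\Phi(E)}(\fc R[x,f])$; the latter being an $R[x,f]$-submodule of $\Phi(E)$, it contains the $R[x,f]$-submodule $T$ generated by $(0:_E \fm)$. Therefore $\fc R[x,f]$ annihilates $T$, giving $\fc \subseteq \fb$, and thus $\grann_{R[x,f]} T = \fc R[x,f]$.

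The main obstacle is conceptual rather than computational, and it lies in part (i): one has to recognise that maximality of $\fc$, transported to $R/\fc$ via Theorem \ref{fp.11}(v), eliminates every proper nonzero $\Phi_{R/\fc}(E_{R/\fc})$-special ideal, and then to invoke the big-test-ideal characterisation of Lemma \ref{hbf1.6} to convert this into the equality $\widetilde{\tau}(R/\fc) = R/\fc$. Once this translation is in place, part (ii) is a straightforward double inclusion that uses maximality of $\fc$ for $\fb \subseteq \fc$ and full $\Phi(E)$-speciality of $\fc$ for $\fc \subseteq \fb$.
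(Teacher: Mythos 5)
Your proof is correct. For part (i), you take essentially the same route as the paper: the paper invokes Corollary \ref{fp.12}(iv) to conclude that the big test ideal of $R/\fc$ lifts to a member of $\mathcal{I}(\Phi(E))$ containing $\fc$, which by maximality forces $\widetilde{\tau}(R/\fc) = R/\fc$; you reach the same conclusion by unpacking Theorem \ref{fp.11}(ii),(iv),(v) and Lemma \ref{hbf1.6}, which is the material underlying Corollary \ref{fp.12} anyway. (One small citation slip: the identification $\mathcal{I}_{R/\fc}(J) = \{\fg/\fc : \fg \in \mathcal{I}(\Phi(E)),\ \fg \supseteq \fc\}$ is Theorem \ref{fp.11}(ii), not part (v).)

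For part (ii) your argument is genuinely different from, and cleaner than, the paper's. The paper proves $\fd := (0:_R T)$ is maximal among proper $\Phi(E)$-special ideals by contradiction: it supposes a strictly larger $\fh \in \mathcal{I}(\Phi(E))$ exists, applies full $\Phi(E)$-speciality to $\fh$ to force $T \subseteq \ann_{\Phi(E)}(\fh R[x,f])$, and then runs the Galois correspondence between $\mathcal{I}(\Phi(E))$ and $\mathcal{A}(\Phi(E))$ to derive $\fh = \fd$. You instead observe that full $\Phi(E)$-speciality can be applied to $\fc$ itself: the chain $(0:_E\fm) \subseteq (0:_E\fc) \subseteq \ann_{\Phi(E)}(\fc R[x,f])$ shows that the $R[x,f]$-submodule $T$ generated by the socle lies in $\ann_{\Phi(E)}(\fc R[x,f])$, so $\fc \subseteq (0:_R T)$, and the reverse inclusion is immediate from maximality of $\fc$ once you note $(0:_R T)$ is a proper $\Phi(E)$-special ideal. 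This direct double inclusion avoids the contradiction and the explicit appeal to the $\mathcal{I}$--$\mathcal{A}$ bijection, at the cost of nothing; both arguments use the standing hypothesis at exactly one point, but yours applies it to $\fc$ rather than to a hypothetical $\fh$.
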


\begin{proof} (i) By Corollary \ref{res.6}, there is a
unique largest $\Phi(E)$-special proper ideal $\fc$ of $R$, and this
is prime. By Corollary \ref{fp.12}(iv), the big test ideal of
$R/\fc$ is $R/\fc$ itself, so that $1_{R/\fc}$ is a big test element
for $R/\fc$. Therefore the zero submodule of $E_{R/\fc}(R/\fm)$ is
tightly closed in $E_{R/\fc}(R/\fm)$, and so $R/\fc$ is strongly
$F$-regular.

(ii) Note that $T$ is the image of the $R[x,f]$-homomorphism
$$
R[x,f] \otimes_R (0:_E\fm) \lra R[x,f] \otimes_R E = \Phi(E)
$$
induced by the inclusion map $(0:_E\fm) \stackrel{\subseteq}{\lra}
E$. Let $\fd$ be the $\Phi(E)$-special ideal of $R$ for which
$\grann_{R[x,f]}T = \fd R[x,f]$. Since $\fd$ annihilates
$(0:_E\fm)$, we see that $\fd$ is proper. Suppose that there exists
$\fh \in {\mathcal I}(\Phi(E))$ such that $\fd \subset \fh \subseteq
\fm$. (The symbol `$\subset$' is reserved to denote strict
inclusion.) Thus we have $(0:_E\fm) \subseteq (0:_E\fh) \subseteq
(0:_E\fd)$. But we know that every $\Phi(E)$-special ideal of $R$ is
fully $\Phi(E)$-special, and therefore $(0:_E\fh) \subseteq
(\ann_{\Phi(E)}(\fh R[x,f]))_0$. Since $\ann_{\Phi(E)}(\fh R[x,f])$
is an $R[x,f]$-submodule of $\Phi(E)$, it follows that
$$
T \subseteq \ann_{\Phi(E)}(\fh R[x,f]) \subseteq \ann_{\Phi(E)}(\fd
R[x,f]).
$$
Now take graded annihilators: in view of the bijective
correspondence between the sets ${\mathcal I}(\Phi(E))$ and
${\mathcal A}(\Phi(E))$ alluded to in the Introduction, we have
\begin{align*}
\fd R[x,f] & = \grann_{R[x,f]}(\ann_{\Phi(E)}(\fd R[x,f]))\\ &
\subseteq \grann_{R[x,f]}(\ann_{\Phi(E)}(\fh R[x,f])) = \fh R[x,f]
\\ & \subseteq \grann_{R[x,f]}T = \fd R[x,f].
\end{align*}
Hence $\fh = \fd$ and we have a contradiction.

Thus $\fd$ is a maximal member of the set of proper
$\Phi(E)$-special ideals of $R$; therefore $\fd = \fc$.
\end{proof}

\begin{defi}
[I. M. Aberbach and F. Enescu {\cite[Definition 3.2]{AbeEne05}}]
\label{Splprime} Suppose $(R,\fm)$ is $F$-finite and reduced. Let
$u$ be a generator for the socle $(0:_E\fm)$ of $E$. Aberbach and
Enescu defined
$$
\fP = \left\{r \in R : r\otimes u = 0 \mbox{~in~} R^{(n)}\otimes_RE
\mbox{~for all~}n \gg 0\right\},
$$
an ideal of $R$.
\end{defi}

In \cite[\S 3]{AbeEne05}, Aberbach and Enescu showed that in the
case where $(R,\fm)$ is $F$-finite and $F$-pure, and with the
notation of \ref{Splprime}, the ideal $\fP$ is prime and is equal to
the set of elements $c \in R$ for which, for all $e \in \N$, the
$R$-homomorphism $\phi_{c,e} : R \lra R^{1/p^e}$ for which
$\phi_{c,e}(1) = c^{1/p^e}$ does not split over $R$. Aberbach and
Enescu call this $\fP$ the {\em splitting prime for $R$}. By
\cite[Theorem 4.8(i)]{AbeEne05}, the ring $R/\fP$ is strongly
$F$-regular.

\begin{prop}
\label{ae.1} Suppose that $(R,\fm)$ is $F$-finite and $F$-pure. Let
$\fP$ be Aberbach's and Enescu's splitting prime, as in {\rm
\ref{Splprime}}. Let $\fq$ be the unique largest $\Phi(E)$-special
proper ideal of $R$, as in Theorem {\rm \ref{res.6p}}. Then $\fP =
\fq$.
\end{prop}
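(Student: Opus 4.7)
The plan is to obtain concrete element-wise descriptions of both $\fq$ and $\fP$ as subsets of $R$ cut out by the vanishing of tensors of the form $rx^n \otimes u$ in $Rx^n \otimes_R E \cong R^{(n)} \otimes_R E$, and then to reconcile the two descriptions using the $x$-torsion-freeness of $\Phi(E)$ (which is available because $R$ is $F$-pure).

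First I would unwind $\fq$. Let $u$ generate the one-dimensional socle $(0:_E\fm)=Ru$, so that the submodule $T$ of Theorem \ref{res.6p}(ii) is exactly $R[x,f]\cdot u$, with $n$th graded piece $R\cdot(x^n\otimes u)\subseteq Rx^n\otimes_R E$. Since $T\subseteq \Phi(E)$ is $x$-torsion-free, \cite[Lemma 1.9]{ga} (as recalled in the Introduction) combined with Theorem \ref{res.6p}(ii) identifies $\fq = (0:_R T)$, which unwinds to
\[
\fq = \left\{r \in R : rx^n \otimes u = 0 \text{ in } Rx^n \otimes_R E \text{ for all } n \geq 0\right\}.
\]
On the other hand, using the $(R,R)$-bimodule isomorphism $R^{(n)} \cong Rx^n$, $r \mapsto rx^n$, Definition \ref{Splprime} reads
\[
\fP = \left\{r \in R : rx^n \otimes u = 0 \text{ in } Rx^n \otimes_R E \text{ for all } n \gg 0\right\},
\]
so the inclusion $\fq \subseteq \fP$ is immediate.

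For the reverse inclusion $\fP \subseteq \fq$, the crux is the following descending-induction step: if $rx^{n+1}\otimes u = 0$ then $rx^n \otimes u = 0$. The commutation rule $xr = r^p x$ gives
\[
x \cdot (rx^n \otimes u) \;=\; r^p x^{n+1} \otimes u \;=\; r^{p-1}\bigl(rx^{n+1}\otimes u\bigr) \;=\; 0,
\]
so $rx^n\otimes u$ is $x$-torsion in $\Phi(E)$ and hence zero by $F$-purity. Starting from any threshold $N$ with $rx^N \otimes u = 0$ (guaranteed for $r\in\fP$) and descending to $n = 0$ then yields $r \in \fq$. There is no serious obstacle; the only points warranting brief checks are that $r \in (0:_R T)$ is genuinely captured by vanishing of the specific tensors $rx^n \otimes u$ (the general element $rsx^n \otimes u = s(rx^n \otimes u)$ vanishing by $R$-linearity on the $n$th graded piece), and that the descending induction correctly bridges the gap between ``$n \gg 0$'' and ``all $n\geq 0$''.
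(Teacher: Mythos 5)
Your proposal is correct and follows essentially the same route as the paper: identify both $\fP$ and $\fq$ as $\{r \in R : rx^n \otimes u = 0\}$ (for $n \gg 0$ and $n \geq 0$ respectively) via Theorem \ref{res.6p}(ii), and close the gap by the descending step $x\cdot(rx^n\otimes u) = r^{p-1}(rx^{n+1}\otimes u) = 0$ combined with $x$-torsion-freeness of $\Phi(E)$, which is exactly the paper's computation with the index shifted by one.
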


\begin{proof} Let $u$ be a generator for the socle $(0:_E\fm)$ of $E$. We can write
$$
\fP = \left\{r \in R : rx^n\otimes u = 0 \mbox{~in~} Rx^n\otimes_RE
\mbox{~for all~}n \gg 0\right\}.
$$
Now for a positive integer $j$ and $r \in R$, if $rx^j \otimes u =
0$ in $\Phi(E)$, then $x(rx^{j-1}\otimes u) = r^px^j\otimes u = 0$,
so that $rx^{j-1}\otimes u = 0$ because the left $R[x,f]$-module
$\Phi(E)$ is $x$-torsion-free. Therefore $$\fP  = \left\{r \in R :
rx^n\otimes u = 0 \mbox{~in~} Rx^n\otimes_RE \mbox{~for all~}n \geq
0\right\}.$$ Let $T$ be the $R[x,f]$-submodule of $\Phi(E)$
generated by $(0:_E\fm) \subseteq R\otimes_RE$. We thus see that $
\fP R[x,f] = \grann_{R[x,f]}T$, and this is $\fq R[x,f]$ by Theorem
\ref{res.6p}. Hence $\fP = \fq$.
\end{proof}

\begin{rmks}
\label{ae.2} Suppose that $(R,\fm)$ is $F$-pure and a homomorphic
image of an excellent regular local ring $S$ of characteristic $p$
modulo an ideal $\fA$. By Theorem \ref{res.6p}(i), there exists a
unique largest $\Phi(E)$-special proper ideal, $\fq$ say, of $R$ and
this is prime. Let $\fQ$ be the unique ideal of $S$ containing $\fA$
for which $\fQ/\fA = \fq$.

\begin{enumerate} \item The results of this section suggest that $\fq$ can be viewed as a
generalization of Aberbach's and Enescu's splitting prime: for
example, Proposition \ref{ae.1} shows that $\fq$ is that splitting
prime in the case where $R$ is, in addition, $F$-finite.

\item Note that $R/\fq$ is strongly $F$-regular (in the sense of
Lyubeznik and Smith mentioned at the beginning of the section).

\item By Proposition \ref{pa.2}, we have $(\fA^{[p^n]} : \fA)
\subseteq (\fQ^{[p^n]} : \fQ)$ for all $n\in\N$. In the special case
in which $S$ is $F$-finite, this result was obtained by Aberbach and
Enescu \cite[Proposition 4.4]{AbeEne05}.
\end{enumerate}
\end{rmks}

\end{document}